\theoremstyle{plain}
\newtheorem{theorem}{Theorem}
\newtheorem{lemma}[theorem]{Lemma}
\newtheorem{corollary}[theorem]{Corollary}
\theoremstyle{definition}
\theoremstyle{remark}
\newcommand\blfootnote[1]{%
  \begingroup
  \renewcommand\thefootnote{}\footnote{#1}%
  \addtocounter{footnote}{-1}%
  \endgroup
}
\def\D{\mathrm{d}}
\DeclareMathOperator{\re}{Re}
\DeclareMathOperator{\II}{Im}
\title{\bf Asymptotics of Bivariate Analytic Functions\\ with Algebraic Singularities}
\author{Torin Greenwood \thanks{Supported in part by NSF grant DMS-1344199.}\\
\small School of Mathematics\\ [-0.8ex]
\small Georgia Institute of Technology\\[-0.8ex]
\small Atlanta, GA, U.S.A.\\
\small \tt greenwood@math.gatech.edu\\
}
\date{June 20, 2017\\
\small Mathematics Subject Classifications: 05A16, 05A15}
\begin{document}
\maketitle
\begin{abstract}
In this paper, we use the multivariate analytic techniques of Pemantle and Wilson to derive asymptotic formulae for the coefficients of a broad class of multivariate generating functions with algebraic singularities.  Then, we apply these results to a generating function encoding information about the stationary distributions of a graph coloring algorithm studied by Butler, Chung, Cummings, and Graham (2015).  Historically, Flajolet and Odlyzko (1990) analyzed the coefficients of a class of univariate generating functions with algebraic singularities.  These results have been extended to classes of multivariate generating functions by Gao and Richmond (1992) and Hwang (1996, 1998), in both cases by immediately reducing the multivariate case to the univariate case.  Pemantle and Wilson (2013) outlined new multivariate analytic techniques and used them to analyze the coefficients of rational generating functions.  These multivariate techniques are used here to analyze functions with algebraic singularities.

 \bigskip\noindent \textbf{Keywords:} generating functions, coefficients, asymptotics, multivariate, singularity analysis, algebraic
\end{abstract}

\blfootnote{\copyright\,2017.  This manuscript version is made available under the CC-BY-NC-ND 4.0 license 

\ \ \ \url{http://creativecommons.org/licenses/by-nc-nd/4.0/}}

\section{Introduction}

{For several decades, singularity analysis has been used to derive asymptotic formulae for coefficients of univariate generating functions.  In 1990, for example, Flajolet and Odlyzko found asymptotics for a large class of univariate functions with algebraic singularities {in \cite{FO1990}}.  Examining the coefficients of multivariate generating functions is notoriously more difficult and technical.  Pemantle and Wilson developed techniques to tackle multivariate rational generating functions in \cite{PW2013} and previous work, where they rely on the multivariate Cauchy integral, identifying and analyzing critical regions in the domain of integration that contribute to the integral's asymptotics through Morse theory.}  In this paper, we will look at the coefficients of $H(x, y)^{-\beta}$, where $H$ is an analytic function and $\beta \not \in \mathbb{Z}_{\leq 0}$ is a real number.  Under some assumptions about the zero set of $H$, we will find an asymptotic approximation for the coefficients $[x^ry^s]H(x, y)^{-\beta}$ as $r$ and $s$ approach infinity with $\frac{r}{s}$ in a nearly-fixed ratio, as described in Theorem \ref{MainResult}.

  Flajolet and Odlyzko's 1990 results relied on using the Cauchy integral formula and explicit contour manipulations.  {Later in the 1990s, Gao, Richmond, Bender, and Hwang} extended these results to classes of bivariate functions by temporarily fixing a variable and applying univariate results, which required special restrictions on the bivariate functions.  {(See Section \ref{history} below for more details.)}  In this paper, we instead rely on the multivariate techniques that Pemantle and Wilson developed, manipulating the multivariate Cauchy integral formula directly.  More details {of these techniques} are in Section \ref{MAC} below.  {By using a combination of the Pemantle and Wilson techniques and the contour manipulations of the original Flajolet and Odlyzko work, we avoid using Morse theory.  The algebraic singularities lead to manipulations of the torus on a Riemann surface instead of multidimensional complex space.  However, this does not change the main methods of the asymptotic analysis, except requiring careful tracking of the argument of some expressions.}
    
    {In Section \ref{Bivariate}, we state our main result (Theorem \ref{MainResult}), which we prove in subsequent sections.}  Then, in Section \ref{Examples}, we look at examples of our results, including an application of Theorem \ref{MainResult} to a generating function that encodes properties of the stationary distributions of random colorings on the complete graphs, as found in \cite{GG2015}.
    
    An extended abstract of this paper, \cite{Gre2016}, {appeared} in the proceedings of the 28th International Conference on Formal Power Series and Algebraic Combinatorics.

{\section{Historical Background}
In this section, we provide some information about previous results in singularity analysis on which this work relies.}

\subsection{Multivariate Analytic Combinatorics of Rational Functions}
\label{MAC}

In \cite{PW2013}, Pemantle and Wilson outline a program which greatly extends the results of previous work on multivariate generating function analysis.  {Although many of the technical details of the program are not needed to prove the results in this paper, Pemantle and Wilson's work still lays the foundation for our approach.}  In the simplest case, Pemantle and Wilson begin with a rational function, $F(\mathbf{z}) = G(\mathbf{z})/H(\mathbf{z})$, where $G$ and $H$ are polynomials with real coefficients in the variables $z_1, \ldots, z_d$, and where $F(\mathbf{z})$ is analytic near the origin.  We write $\mathbf{z} = (z_1, \ldots, z_d)$ and $\mathbf{z}^{\mathbf{r}} = z_1^{r_1} \cdots z_d^{r_d}$. Then, $F(\mathbf{z})$ has the series representation:
\[
F(\mathbf{z}) = \sum_{\mathbf{r} \in \mathbb{N}^d} a_{\mathbf{r}} \mathbf{z}^{\mathbf{r}}.
\]
  The multivariate Cauchy integral formula tells us {for $\mathbf{r} \in \mathbb{Z}^d_{\geq 0}$}:
\begin{equation}
 \left[\mathbf{z}^\mathbf{r}\right] F(\mathbf{z}) = \left(\frac{1}{2 \pi i}\right)^d \int_{T} F(\mathbf{z}) \mathbf{z}^{-\mathbf{r}-1} \, \D \mathbf{z}. \label{MCauchy}
 \end{equation}
Here, the torus $T = \{|z_1| = c_1\} \times \cdots \times \{|z_d| = c_d\}$ is small enough that it does not enclose any singularities of $F(\mathbf{z})$.  The goal is to approximate $\left[ \mathbf{z}^{n \hat{\mathbf{r}}}\right] F(\mathbf{z})$ for some fixed unit vector $\hat{\mathbf{r}} \in \mathbb{R}_{\geq 0 }^d$ as $n$ approaches infinity, {or if $\hat{\mathbf{r}}$ has irrational components, to approximate coefficients $[\mathbf{z}^{n \mathbf{s}_n}]F(\mathbf{z})$ for large $n$ with $\mathbf{s}_n$ tending towards $\hat{\mathbf{r}}$}.    

To analyze the Cauchy integral, {the torus $T$ can be expanded} into a cycle $\mathcal{C}$ which gets stuck on some chosen subset of the singularities of $F(\mathbf{z})$ (which are the zeroes of $H(\mathbf{z})$), and expands beyond them elsewhere.  Due to the $\mathbf{z}^{-\mathbf{r}}$ term in the integrand, we {expect} that as $\mathbf{r} \to \infty$, the integrand will decay exponentially faster in the regions of $\mathcal{C}$ away from the singularities of $F$, since the magnitude of $\mathbf{z}$ is larger in these regions.  {In this case}, we can approximate the integral by analyzing the integrand near the singularities, since the rest of the integral decays too quickly to contribute to the asymptotics.  However, {the method of expanding $T$ needs to be chosen carefully in order to ensure this works}.

{To expand $T$ successfully}, we need to minimize the maximum modulus of $\mathbf{z}^{-\mathbf{r}}$ along our contour $\mathcal{C}$.  The reason for this is as follows: we want to find a contour where the integrand attains its maximum modulus over some small interval, and then decays rapidly away from this interval.  At a point where the maximum modulus is not minimized, the argument of the $\mathbf{z}^{-\mathbf{r}}$ term will oscillate rapidly as $\mathbf{r}$ tends to infinity, which leads to cancellation near the singularity.  However, when the maximum modulus is minimized, we {can} approximate the integral in this region by using saddle point methods.

To minimize the maximum modulus, we consider the height function, $h(\mathbf{z}) := -\hat{\mathbf{r}} \cdot \mbox{Re}\, \log \mathbf{z}.$ Although this excludes the contribution from $F(\mathbf{z})$ in the integrand, $F(\mathbf{z})$ is bounded on compact sets, so $h$ still approximates the log modulus of the integrand as $\mathbf{r}$ approaches infinity in the direction of $\hat{\mathbf{r}}$.  {With the goal of expanding} the torus $T$ until it hits a singularity of $F$, we consider the values of $h$ on $\mathcal{V}:= \{\mathbf{z}:H(\mathbf{z}) = 0\}$.  On a cycle where the maximum of $h$ is minimized, the points where the maximum of $h$ is attained are saddle points of $h$.
Thus, the critical points of $h$ restricted to $\mathcal{V}$ will be candidates for the singularities that will contribute to the asymptotics.

To find the critical points of $h$, we consider a stratification of the space $\mathcal{V}$, {restricting} our attention to critical points within a certain stratum $S$.  
When $\mathcal{V}$ is a smooth manifold near a critical point, the critical point is called smooth.  In this case, the stratum $S$ is of dimension $d - 1$, and we can find $d - 1$ equations (in addition to $H = 0$) that characterize the location of the smooth critical points:
\[
r_1z_2 \frac{\partial H}{\partial z_2} = r_2 z_1 \frac{\partial H}{\partial z_1}, \ \ \ r_1z_3 \frac{\partial H}{\partial z_3} = r_3 z_1 \frac{\partial H}{\partial z_1},   \ \ \ \ldots, \ \ \  
r_1 z_d \frac{\partial H}{\partial z_d} = r_d z_1 \frac{\partial H}{\partial z_1}.
\]
When $H$ is a polynomial, the above critical point equations form a system of polynomial equations.  In this case, Gr\"{o}bner bases can help compute the critical points.  In general, it is not necessarily true that all critical points will contribute to the leading term of the Cauchy integral. {In this paper, we} require that the critical points be \emph{minimal} (described in Section \ref{Bivariate} below), which {guarantees} that they do.  {To see examples of identifying minimal critical points, see Section \ref{Examples} below.}

After determining which critical points are candidates for contributing to the asymptotics, we still must expand the torus $T$ into a cycle $\mathcal{C}$ which hugs $\mathcal{V}$ near these points.  Goresky and MacPherson show in \cite{GM1988} how Morse theory can lead to an explicit description of the domain of integration near a critical point.  In the case of generating functions without algebraic singularities, this machinery can be used to evaluate the residues of the integrals near each critical point, quickly leading to asymptotic expansions for the coefficients.  However, in the case where $H^{-\beta}$ has algebraic singularities, we rely on specific homotopies of the contour, and hence we do not need to use Morse theory to determine the domain of integration.  The portion of the contour near a particular critical point is called a \emph{quasi-local cycle}.  The asymptotics of the coefficients are thus given by a sum of integrals over these quasi-local cycles.  Below, we find the leading-term asymptotics for the coefficients.  However, these integral analyses can be used to find complete asymptotic expansions of coefficients, as in Raichev and Wilson's work in \cite{RW2008}.

\subsection{{Asymptotics Involving Algebraic Singularities}} \label{history}
In their 1990 paper \cite{FO1990}, Flajolet and Odlyzko described how to compute the asymptotics of a class of univariate generating functions with algebraic singularities.  They considered functions of the form,
\begin{equation} \label{gform}
g(z) = K(1 - z)^\alpha \left(\log(1 - z)\right)^\gamma \left(\log \log (1 - z)\right)^\delta,
\end{equation}
where $\alpha, \gamma, \delta,$ and $K$ are arbitrary real numbers, along with other related classes of functions.  Their results differed from previous results both in the class of generating functions covered, and in their method of proof.  Because we will use similar techniques in our proofs later, we take a moment to summarize their proof here.  Flajolet and Odlyzko relied on the univariate Cauchy integral formula:
\begin{displaymath}
\left[z^n\right] g(z) = \frac{1}{2 \pi i} \int_{\mathcal{C}} g(z) \frac{\D z}{z^{n + 1}}.
\end{displaymath}
Here, $\left[z^n\right] g(z)$ represents the coefficient of $z^n$ in the power series expansion of $g$, and $\mathcal{C}$ is any positively-oriented contour around the origin which does not enclose any singularities of $g(z)$.  Starting with any function $f$ such that $f(z) = O\left(|1 - z|^\alpha\right)$ as $z \to 1$, and letting $\mathcal{C}$ be a small circle around the origin, the authors expanded $\mathcal{C}$ in hopes of finding a contour which is easier to analyze.  In order to expand $\mathcal{C}$, Flajolet and Odlyzko also require the extra assumption that $f$ is analytic within the expanded contour $\mathcal{C}^*$.  As $\mathcal{C}$ expands, it must avoid not only the singularity at $1$, but also the branch cut emanating from this point.  They expand the contour {so it looks like the contour} $\mathcal{C}^*$, as shown in Figure \ref{fig:Hankel}.  {Like a Hankel contour, this contour wraps around the branch cut of $g$ and extends beyond the singularity at $1$, although $\mathcal{C}^*$ does not extend to infinity.}
\begin{figure}
\centering
\def\svgwidth{.35\linewidth}
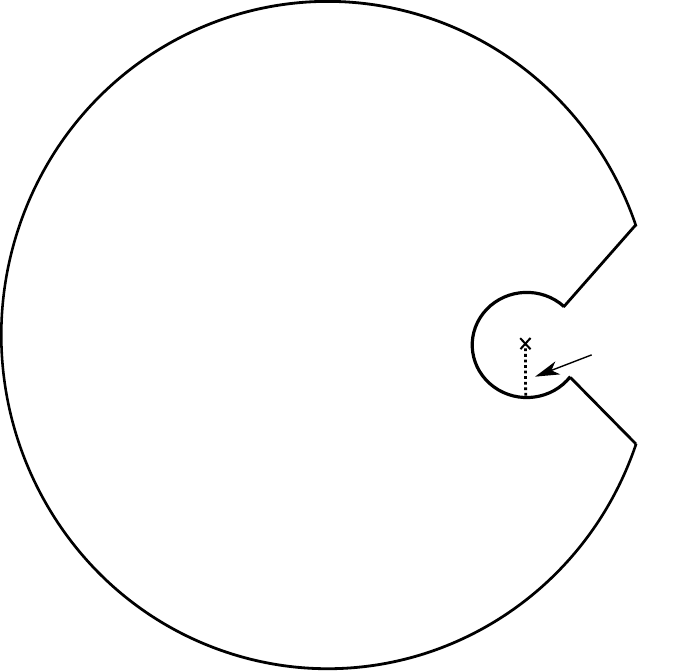
\caption{The expanded contour, $\mathcal{C}^*$, used in Flajolet and Odlyzko's proof.}
\label{fig:Hankel}
\end{figure}
From here, the contour is broken up into segments, $\gamma_1, \gamma_2, \gamma_3,$ and $\gamma_4$.  As $n$ approaches infinity, $f(z)/z^{n + 1}$, the integrand in the Cauchy integral formula, decays exponentially faster on $\gamma_4$ than it does on $\gamma_1$.  For this reason, the integral over $\gamma_4$ is negligible in the asymptotic expansion of $\left[z^n \right] f(z)$.  Likewise, the contribution along most of $\gamma_2$ and $\gamma_3$ is negligible, meaning that the asymptotics of $\left[z^n \right] f(z)$ are controlled by the integrand near $z = 1$.  However, near $z = 1$, $f(z) = O(|z - 1|^\alpha)$, which means that $f$ is bounded along the contours near the critical point, leading to the bound, $[z^n] f(z) = O(n^{-\alpha - 1})$.  Flajolet and Odlyzko then extended their results to functions $g(z)$ with the form in Equation \eqref{gform}.

{Before moving on to further developments with algebraic singularities, we highlight the connection between the proof outlined above and the results in this paper.  To analyze bivariate generating functions, we deform a torus in two complex dimensions.  Let $(p, q)$ be a point contributing to the asymptotics of such a bivariate generating function.  Then, in the proof below, one of the circles in the torus is expanded to a circle of radius $|q|$, while the other circle of the torus is expanded until it wraps around the singularity at $p$, similarly to how the Flajolet-Odlyzko contour wraps around the singularity in the univariate case.

After Flajolet and Odlyzko published their results in 1990}, other researchers extended these results to classes of multivariate generating functions.  Bender and Richmond, \cite{BR1983}, had already considered the asymptotics of multivariate generating functions with poles in 1983.  In 1992, Gao and Richmond, \cite{GR1992}, considered classes of bivariate generating functions $F(z, x)$ which are of a form they called algebraico-logarithmic, which includes some generating functions with algebraic singularities.  These algebraico-logarithmic functions could be reduced to univariate generating functions where the results of Flajolet and Odlyzko can be applied.  

Then, in his 1996 and 1998 papers, \cite{H1996} and \cite{H1998}, Hwang expanded upon the multivariate results, using a probability framework and deriving large deviation theorems.  In 1996, Hwang considered sequences of random variables $\{X_n\}$.  Assuming that the moment generating functions of the $X_n$ were of a particular form, Hwang proved a central limit theorem for $\{X_n\}$.  Then, he considered a class of bivariate generating functions $P(w, z)$ such that after approximating $\left[z^n\right]P(w, z)$ with Flajolet and Odlyzko's univariate results, $\left[z^n\right]P(w, z)$ satisfied the same conditions he required previously of the moment generating functions of $X_n$.  Applying his central limit theorem gave asymptotic results for a new class of bivariate generating functions.  In 1998, Hwang extended his results by using univariate saddle point methods to approximate integrals.

\section{{Main Result:} Bivariate Analytic Functions with Algebraic Singularities} \label{Bivariate}
In this paper, our goal is to find the asymptotics of the coefficients of $H(x, y)^{-\beta}$, where $H$ is an analytic function with real coefficients and $\beta \in \mathbb{R}$ is not a negative integer.  Let us summarize notation in a bivariate setting.  Let $\mathcal{V}$ be the zero set of the analytic function, $H(x, y)$, where $H(0, 0) \neq 0$.  We will approximate the coefficients $\left[x^ry^s\right]H(x, y)^{-\beta}$ for a fixed $\beta \in \mathbb{R}$ as $r$ and $s$ approach infinity with their ratio approaching a constant, $\lambda$.  Critical points in the direction of $\lambda = \frac{r + O(1)}{s}$ (as $r$ and $s$ approach infinity) are defined by:
%
%
%
%
%
%
\begin{align*}
H &= 0,\\
r y \frac{\partial H}{\partial y} &= s x \frac{\partial H}{\partial x}.
\end{align*}

The critical points are smooth if the gradient of $H$ does not vanish on $\mathcal{V}$ at the critical points.  Let $\mathcal{D}$ be the domain of convergence of the power series of $H^{-\beta}$ that converges around the origin, $(0, 0)$.  Then, a critical point $(p, q)$ is called minimal if $(p, q) \in \partial \mathcal{D}$.  A collection of critical points is called strictly minimal if there are no other zeroes of $H$ on $\partial \mathcal{D}$.  {For notational convenience, we will represent partial derivatives with subscripts, so that for instance, $H_x = \frac{\partial H}{\partial x}$.}  We will apply heuristics from Section \ref{MAC} to prove the following:
%
%

\begin{theorem}\label{MainResult}
Let $H$ be an analytic function with exactly $n$ strictly minimal critical points $\{(p_i, q_i)\}_{i = 1}^n$, all of which are smooth and lie on the same torus $T^*$.  (Hence, $|p_i| = |p_j|$ and $|q_i| = |q_j|$ for all $1 \leq i, j \leq n$.)   Let $\beta \in \mathbb{R}$ with $\beta \not\in \mathbb{Z}_{\leq 0}$, and let $\lambda = \frac{r + O(1)}{s}$ as $r, s \to \infty$ {with $r$ and $s$ integers}.   Define $\chi_1, \chi_2,$ and $M_i$ as follows (where $\chi_1$ and $\chi_2$ depend on $i$):
\begin{align*}
{\chi_{1, i}} &= \frac{H_y(p_i, q_i)}{H_x(p_i, q_i)} = \frac{p_i}{\lambda q_i},\\
{\chi_{2, i}} &= \frac{1}{2H_x} (\chi_1^2 H_{xx} - 2\chi_1 H_{xy} + H_{yy})\bigg|_{(x, y) = (p_i, q_i)},\\
M_i &= - \frac{2 {\chi_{2, i}}}{p_i} - \frac{{\chi_{1, i}^2}}{p_i^2} - \frac{1}{\lambda q_i^2}.
\end{align*}

For all $i$, assume $p_i, q_i, H_x (p_i, q_i),$ and $M_i$ are nonzero, and assume that the real part of {$-q_i^2M_i$} is strictly positive.  Define $\left\{x^{-\beta}\right\}_P$ as the value of $x^{-\beta}$ defined by using a ray from the origin of $\mathbb{C}$ as the branch cut of the logarithm.  In this definition, choose any ray such that $\left\{H(x, y)^{-\beta}\right\}_P = H(x, y)^{-\beta}$ in a neighborhood of the origin in $\mathbb{C}^2$ (as defined by the power series of $H^{-\beta}$), and such that this ray does not pass through $-p_i H_x(p_i, q_i)$ for any $i$.  Let $\omega_i$ be the signed number of times the curve $H(tp_i, tq_i)$ crosses this branch cut in a counterclockwise direction as $t$ increases, $0 \leq t < 1$.  Then, the following expression holds as $r, s \to \infty$:
\[
[x^ry^s] H(x, y)^{-\beta} = \sum_{i = 1}^n  \frac{r^{\beta - \frac{3}{2}} p_i^{-r} q_i^{-s} \left\{(-H_x(p_i, q_i)p_i)^{-\beta}\right\}_P e^{-\beta(2 \pi i \omega_i)}}{\Gamma(\beta) \sqrt{-2 \pi q_i^2 M_i}} + o\left(r^{\beta - \frac{3}{2}} p_1^{-r} q_1^{-s}\right).
\]
Here, the square root in the denominator is taken to be the principal root.
\end{theorem}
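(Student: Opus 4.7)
The plan is to combine the bivariate Cauchy integral formula with a Flajolet-Odlyzko-style Hankel contour in $x$ and a standard saddle point analysis in $y$, following the Pemantle-Wilson philosophy but sidestepping the Morse-theoretic machinery. Begin with
\[
[x^r y^s]H(x,y)^{-\beta} = \frac{1}{(2\pi i)^2} \oint \oint H(x,y)^{-\beta} x^{-r-1} y^{-s-1} \, dx \, dy
\]
on a small torus inside $\mathcal{D}$. Because $H(0,0)\neq 0$ and the critical points are strictly minimal, this torus can be deformed outward to one that hugs $T^*$ and is indented inward only near each $(p_i,q_i)$. Away from all critical points on $T^*$, the function $|H|$ is bounded below, so a slight further outward push of the contour yields an exponentially decaying contribution that is absorbed into the $o(\cdot)$ error.

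Near each $(p_i,q_i)$, restrict $y$ to a small arc $A_i$ of $\{|y|=|q_i|\}$ around $q_i$. By smoothness ($H_x(p_i,q_i)\neq 0$) and the implicit function theorem, there is an analytic function $p(y)$ with $H(p(y),y)=0$ and $p(q_i)=p_i$, and a factorization $H(x,y)=(x-p(y))U(x,y)$ with $U(p(y),y)=H_x(p(y),y)$ nonvanishing. Deform the inner $x$-contour into a Flajolet-Odlyzko Hankel contour $\mathcal{C}^*_y$ wrapping the branch cut emanating from $p(y)$ at distance $O(1/r)$, closed by an arc slightly beyond $|x|=|p_i|$ where the integrand decays exponentially. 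The substitution $x = p(y)(1-t/r)$ reduces the Hankel piece to the standard Hankel representation of $1/\Gamma(\beta)$, producing uniformly for $y\in A_i$,
\[
\frac{1}{2\pi i}\int_{\mathcal{C}^*_y} H(x,y)^{-\beta} x^{-r-1}\,dx = \frac{r^{\beta-1}}{\Gamma(\beta)} (-p(y)H_x(p(y),y))^{-\beta} p(y)^{-r} (1 + O(1/r)),
\]
where the branch of $(\cdot)^{-\beta}$ is dictated by analytic continuation of the power series of $H^{-\beta}$ from the origin.

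For the outer $y$-integral, parametrize $y=q_ie^{i\theta}$ and set $\Phi(\theta)=-r\log p(q_ie^{i\theta})-is\theta$. Implicit differentiation of $H(p(y),y)=0$ twice yields $p'(q_i)=-H_y/H_x=-\chi_{1,i}$ and $p''(q_i)=-2\chi_{2,i}$; combined with $\lambda=(r+O(1))/s$ and $\chi_{1,i}=p_i/(\lambda q_i)$, a direct computation gives $\Phi'(0)=O(1)$ and $\Phi''(0)=rq_i^2 M_i$. Since $\re(-q_i^2 M_i)>0$, the classical saddle point estimate produces the factor $\sqrt{2\pi/(-rq_i^2 M_i)}$, furnishing both the $r^{-1/2}$ and the $\sqrt{-2\pi q_i^2 M_i}$ in the denominator; the portion of the $y$-circle away from $q_i$ contributes only exponentially smaller terms. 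Combining with the Hankel factor and summing over $i$ reproduces the stated formula up to identifying the branch of $(-p_iH_x(p_i,q_i))^{-\beta}$.

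The main obstacle is the careful branch bookkeeping. The Hankel deformation delivers $(-p(y)H_x(p(y),y))^{-\beta}$ as the analytic continuation of the power-series branch of $H^{-\beta}$ at the origin, which in general differs from the principal-ray branch $\{\cdot\}_P$ chosen in the theorem. Tracking this continuation along the radial path $t\mapsto(tp_i,tq_i)$ from $(0,0)$ toward $(p_i,q_i)$, each signed crossing of the branch-cut ray by the curve $t\mapsto H(tp_i,tq_i)$ contributes a multiplicative factor of $e^{-2\pi i\beta}$; the total count $\omega_i$ yields the $e^{-\beta(2\pi i\omega_i)}$ appearing in the formula. Establishing uniformity of the Hankel estimate as $y$ varies over $A_i$, and verifying that the residual pieces of both contours contribute only to the $o(\cdot)$ error, are additional technical obligations but can be handled by routine estimates.
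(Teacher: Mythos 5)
Your proposal is correct in its essential structure and arrives at the right constants, but it takes a genuinely different route from the paper. You treat the Cauchy integral as an iterated integral: for each fixed $y$ near $q_i$ you factor $H(x,y)=(x-p(y))U(x,y)$, apply a uniform Flajolet--Odlyzko/Hankel estimate to the inner $x$-integral to get the $y$-dependent amplitude $\frac{r^{\beta-1}}{\Gamma(\beta)}(-p(y)H_x(p(y),y))^{-\beta}p(y)^{-r}$, and then push all of the $y$-dependence into one saddle-point analysis of the phase $-r\log p(q_ie^{i\theta})-is\theta$; your computation $\Phi''(0)=rq_i^2M_i$ from $p'(q_i)=-\chi_{1,i}$ and $p''(q_i)=-2\chi_{2,i}$ checks out exactly against the paper's definition of $M_i$, and your branch bookkeeping via $\omega_i$ matches the paper's. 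The paper instead first performs the change of variables $u=x+\chi_1(y-q)+\chi_2(y-q)^2$, $v=y$, which flattens the zero set to third order ($\kappa(v)=O(v-q)^3$) so that the quasi-local cycle can be deformed into a genuine product contour; the double integral then literally factors into a $v$-independent Hankel-type $u$-integral (evaluated by the binomial theorem and Stirling) times a Fourier--Laplace $v$-integral whose phase has no linear term, with correction factors $K$ and $L$ shown to be $1+o(1)$. What your route buys is directness and a cleaner link to the univariate transfer theorem; what it costs is that the whole burden shifts to proving the inner Hankel estimate uniformly in $y$ with error control good enough to survive the outer integration --- which is precisely the two-variable bookkeeping the paper's product decomposition and three-regime contour (with the interpolation $\tilde\kappa$) are designed to organize. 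The items you defer (uniformity over $A_i$, negligibility of the residual contour pieces, the exact branch identification) are where the paper spends most of its effort, so they are substantive obligations rather than afterthoughts, but they do not constitute a gap in the method.
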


Unfortunately, for general $H$, the formula in Theorem \ref{MainResult} is messy, as we must find how many times the image of $H$ wraps around the origin along the path connecting $(0, 0)$ to each critical point {$(p_i, q_i)$}, and it is difficult to determine the sign of the square root.  Luckily, in the case where $H$ has only real coefficients and there is a single smooth strictly minimal critical point, we can simplify the formula.

\begin{corollary} \label{GrahamCorollary}
Let $H$ be an analytic function with a single smooth strictly minimal critical point $(p, q)$, where $p$ and $q$ are real and positive.  Let $H$ have only real coefficients in its power series expansion about the origin.  Assume $H(0, 0) > 0$, and consider $H^{-\beta}$ for $\beta \in \mathbb{R}$ with $\beta \not\in \mathbb{Z}_{\leq 0}$.  Also, define $H^{-\beta}$ here with the standard branch chosen along the negative real axis, so that $H(0, 0)^{-\beta} > 0$.  Let $\lambda = \frac{r + O(1)}{s}$ as $r, s \to \infty$ {with $r$ and $s$ integers}.  Define the following quantities:
\begin{align*}
\chi_1 &= \frac{H_y(p, q)}{H_x(p, q)} = \frac{p}{\lambda q},\\
\chi_2 &= \frac{1}{2H_x} (\chi_1^2 H_{xx} - 2\chi_1 H_{xy} + H_{yy})\bigg|_{(x, y) = (p, q)},\\
M &= - \frac{2 \chi_2}{p} - \frac{\chi_1^2}{p^2} - \frac{1}{\lambda q^2}.
\end{align*}
Assume that $H_x(p, q)$ and $M$ are nonzero.  Then, the following expression holds as $r, s \to \infty$:
\[
[x^ry^s] H(x, y)^{-\beta} \sim \frac{r^{\beta - \frac{3}{2}} p^{-r} q^{-s} (-H_x(p, q)p)^{-\beta}}{\Gamma(\beta) \sqrt{-2 \pi q^2 M}}.
\]
\end{corollary}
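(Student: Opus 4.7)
The plan is to derive Corollary \ref{GrahamCorollary} as a direct specialization of Theorem \ref{MainResult} to the single-critical-point, real-coefficient, real-positive case. With $n = 1$ the sum in the theorem collapses to one term, so the only remaining work is to show that under the added hypotheses (i) the winding number $\omega$ vanishes under the standard branch choice, (ii) the bracketed quantity $\{(-p H_x(p, q))^{-\beta}\}_P$ reduces to its ordinary positive real value, and (iii) the principal square root in the denominator is real and positive.

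First I would verify (i) by tracking the image curve $t \mapsto H(tp, tq)$ for $t \in [0, 1)$. Since $H$ has real coefficients and $p, q$ are real, this curve is real-valued. Because $(p, q) \in \partial \mathcal{D}$ and $\mathcal{D}$ is the (complete Reinhardt) domain of convergence of $H^{-\beta}$, the open polydisk $\{|x| < p,\ |y| < q\}$ lies inside $\mathcal{D}$, and hence the open segment $\{(tp, tq) : 0 \le t < 1\}$ contains no zeros of $H$. Continuity together with $H(0, 0) > 0$ then forces $H(tp, tq) > 0$ on $[0, 1)$, with $H(p, q) = 0$. Thus the image curve stays on the positive real axis and never crosses the standard branch cut on the negative real axis, so $\omega = 0$ and the phase factor $e^{-\beta(2 \pi i \omega)}$ equals $1$.

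For (ii), I would compute the sign of $H_x(p, q)$ from the endpoint behavior of the same curve. Since $H(tp, tq)$ is strictly positive on $[0, 1)$ and zero at $t = 1$, the one-sided derivative $\tfrac{d}{dt} H(tp, tq)\big|_{t = 1} = p H_x(p, q) + q H_y(p, q)$ is non-positive, and it equals $p H_x(p, q)(1 + 1/\lambda)$ after substituting the critical-point relation $H_y(p, q) = \chi_1 H_x(p, q) = (p/(\lambda q)) H_x(p, q)$. Since $p, \lambda > 0$ and $H_x(p, q) \ne 0$ by hypothesis, this forces $H_x(p, q) < 0$, so $-p H_x(p, q) > 0$. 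Consequently the standard branch cut does not pass through $-p H_x(p, q)$, verifying the branch-choice condition of Theorem \ref{MainResult}, and $\{(-p H_x(p, q))^{-\beta}\}_P$ collapses to the ordinary positive real power $(-p H_x(p, q))^{-\beta}$.

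Finally for (iii), the hypothesis $\mathrm{Re}(-q^2 M) > 0$ from Theorem \ref{MainResult}, inherited by the corollary, specializes in this real setting to $-q^2 M > 0$, so $\sqrt{-2\pi q^2 M}$ is an unambiguous positive real number. Plugging these three simplifications into the $n = 1$ version of the theorem's formula gives exactly the statement of Corollary \ref{GrahamCorollary}. The only subtlety is the sign analysis in steps (i) and (ii); once one verifies that the diagonal image of $H$ stays on the positive reals and that $H_x(p, q)$ must be negative, the conclusion is a direct substitution with no additional computation required.
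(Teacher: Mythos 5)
Your proof is correct and follows essentially the same route as the paper's: specialize Theorem \ref{MainResult} to $n = 1$, use the real, positive curve $t \mapsto H(tp, tq)$ to conclude that $\omega = 0$ and that $-pH_x(p,q) > 0$, and note that the square root is then the positive principal root. The one point to tighten is your claim that $\re(-q^2 M) > 0$ is \emph{inherited} by the corollary: that condition does not appear among the corollary's stated hypotheses, and the paper instead deduces $-q^2 M > 0$ from the requirement that the coefficients of $H^{-\beta}$ be real, so this positivity should be established (or explicitly flagged as an implicit hypothesis) rather than assumed.
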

In the above expression, $-H_x(p, q) p$ will be a positive real number, and $(-H_x(p, q) p)^{-\beta}$ will also be a positive real number.  Additionally, $-2 \pi q^2 M$ is positive, so the positive square root is taken.

{Unfortunately, even in the simplified setting of Corollary \ref{GrahamCorollary}, it is challenging to verify that a given critical point is strictly minimal.  Section \ref{Examples} below gives a couple examples where the Corollary can be applied.  The RAGlib Maple package, \cite{S2015}, can help verify these conditions computationally.}

{Generating functions of the form in the Theorem and the Corollary are expected to appear in several contexts.  For example, there are many ways of extending the Catalan numbers to multidimensional arrays, like the Fuss-Catalan numbers.  The generating function for the Catalan numbers has a square root, and in multivariate extensions, the generating functions are still algebraic.  Another example is in counting RNA secondary structures with various structural features, called motifs.  RNA secondary structures can be analyzed using stochastic context free grammars, as in \cite{PH2014}, where multiple variables can be used to track more than one type of motif in a secondary structure simultaneously.  In such a context, the Theorem above would give asymptotics on the number of secondary structures with motifs in a fixed ratio, as the number of nucleotides in the sequence approaches infinity.
}

\section{Proof Set-Up}
To prove Theorem \ref{MainResult}, we analyze the multivariate Cauchy integral formula, Equation \eqref{MCauchy}.   {When reduced to two dimensions, the formula becomes the following:
\begin{equation} \label{BCauchy}
\left[x^ry^s\right]H(x, y)^{-\beta} = \left(\frac{1}{2 \pi i}\right)^2 \iint_{T} H(x, y)^{-\beta} x^{-r - 1} y^{-s - 1}\, \D x\, \D y.
\end{equation}
We can immediately reduce to the case where there is only one strictly minimal critical point, which we will label $(p_1, q_1) = (p, q)$.  (Correspondingly, the $i$ in the subscripts of $\chi_{1,i}, \chi_{2, i}, \omega_i$ and $M_i$ will be dropped.)  This reduction is possible because the critical points are discrete, so that the contribution from each critical point $(p_i, q_i)$ to the asymptotics is given by a quasi-local cycle disjoint from the other quasi-local cycles, and thus the contributions can be analyzed independently and then summed.  }  

{
An outline of the analysis is as follows:
\begin{enumerate}
\item In Section \ref{COV}, we find a change of variables into $(u, v)$ coordinates so that the analytic function $H(x, y)$ essentially behaves as a linear function in $u$, with some minor error terms in $v$.  This change of variables also allows us to choose a simpler quasi-local cycle near the critical point, $(p, q)$, where the $u$ and $v$ components of the contour are independent of each other.
\item In Section \ref{QLC}, we describe an appropriate expansion of the torus $T$ in the Cauchy integral formula.  Inspired by the contour from the univariate Flajolet-Odlyzko results described above, we choose a contour which similarly wraps around $p$ in the $u$-coordinate, while passing directly through $q$ in the $v$-coordinate.  The description of the contour is technical to ensure that the contour does not cross over the singularities of $H^{-\beta}$. Next, in Section \ref{away}, we verify that the region of the contour away from $(p, q)$ does not contribute to the asymptotics.
\item With the set-up complete, we are ready to analyze the Cauchy integral.  In Section \ref{productintegral}, we apply the change of variables and contour deformations from the previous step, and then justify that the integrand of the Cauchy integral is approximately the product of a function in $u$ and a function in $v$.  This step is by far the most tedious, taking many lemmas to justify, and requiring analyses along each part of the quasi-local cycle.
\item Finally, the Cauchy integral is broken up into the product of two univariate integrals, which are analyzed in Section \ref{theoremproof}.  The $u$ integral is approximately a univariate Cauchy integral, and can be related to binomial coefficients.  The $v$ integral is a standard Fourier-Laplace type integral.  Multiplying the approximations of these integrals gives the final result.
\end{enumerate}
}

\subsection{A Convenient Change of Variables} \label{COV}
In order to approximate $H(x, y)$ as a univariate linear function near the critical point $(p, q)$, it turns out it is sufficient that the power series expansion of $H$ has no constant term, linear term, nor quadratic term in one of its two input variables.  To transform $H$ into this form, we define the following change of variables:
\begin{align*}
u &= x + \chi_1(y - q) + \chi_2(y - q)^2,\\
v &= y.
\end{align*}
Here, $\chi_1$ and $\chi_2$ are as defined in Theorem \ref{MainResult} above.  Write $H$ as a power series in $u$ and $v$:
\begin{equation} \label{HPower}
H(x, y) = \sum_{m, n \geq 0} d_{mn} (u - p)^m (v - q)^n =: \tilde H(u, v).
\end{equation}
Since $H(p, q) = 0,$ we have that $d_{00} = 0$.  Notice that when $(x, y) = (p, q)$, we also have that $(u, v) = (p, q)$.  We can easily verify that $d_{01} = d_{02} = 0$ by checking some derivatives of $H$.

\subsection{Determining the Quasi-Local Cycle} \label{QLC}
For now, assume that there is a unique critical point, $(p, q)$.  Recall that the original domain of integration in Equation \eqref{BCauchy} is a torus $T$ around the origin which encloses no singularities of $H^{-\beta}(x, y)$.  To decrease the magnitude of the integrand exponentially as $r$ and $s$ approach infinity, we expand the torus $T$ towards the minimal critical point, $(p, q)$.  Because $(p, q)$ is a strictly minimal critical point, there cannot be any zeroes between the origin and $(p, q)$ that would otherwise obstruct the deformation.  Hence, we can expand the domain of integration through a homotopy until it is near the critical point.

Before expanding $T$, it is the product of a small $x$ circle and a small $y$ circle.  Begin the deformation by expanding the $y$ component to the circle, $|y| = |q|$.  The $y$ portion of the quasi-local cycle, $\mathcal{C}_y$, will be the part of this circle where $y = qe^{i \theta}$ for $|\theta| \leq \theta_y$, where $\theta_y > 0$ is a small constant.  Note that $q$ is not necessarily real.  This contour is pictured on the left in Figure \ref{fig:contour}.

\begin{figure}
\centering
\begin{subfigure}{.5\textwidth}
  \centering
  \def\svgwidth{.9\linewidth}
  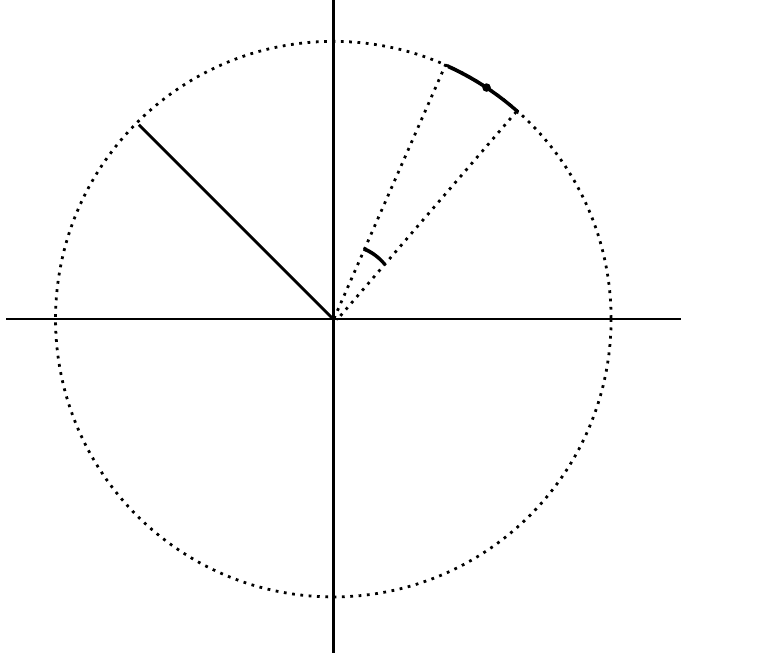
  \label{fig:sub1}
\end{subfigure}%
\begin{subfigure}{.5\textwidth}
  \centering
  \def\svgwidth{.9\linewidth}
  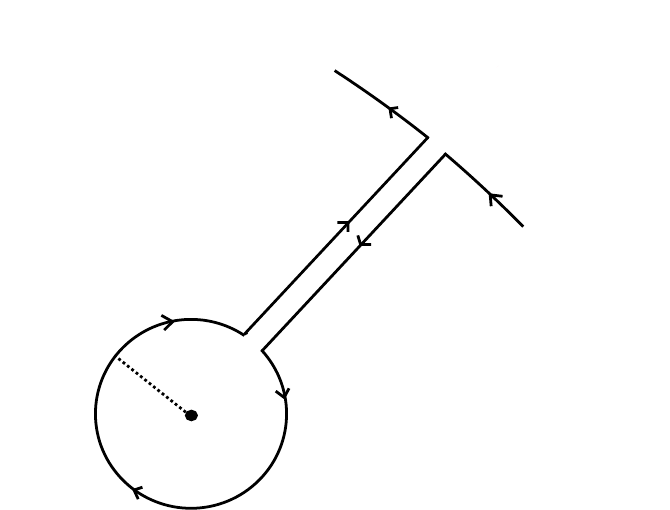
  \label{fig:sub2}
\end{subfigure}
\caption{On the left, the $y$ portion of the quasi-local contour.  On the right, a close-up of the $x$ portion of the quasi-local contour.}
\label{fig:contour}
\end{figure}

Now, for each $y \in \mathcal{C}_y$, we will expand the $x$ circle until it approaches the zero set of $H$ near $p$.  When $y$ is close to $q$, we will wrap the $x$ contour around the zero set of $H$.  However, when $y$ is further away from $q$, we will expand the $x$ contour less, so that it does not come into contact with the zero set of $H$.

More explicitly, since $H_x(p, q) \neq 0$ and $H$ is analytic, the implicit function theorem guarantees that we can parameterize the variety $\mathcal{V} = \{(x, y)| H(x, y) = 0\}$ by a smooth function $G(y)$, so that $H(p + G(y), y) = 0$ for all $y \in \mathcal{C}_y$ with $\theta_y$ sufficiently small.   So, for $y = qe^{i \theta}$ with $|\theta| \leq \frac{\theta_y}{2}$, we choose the $x$ contour appearing on the right in Figure \ref{fig:contour}.  It is not necessarily true that this contour avoids the branch cut of $H^{-\beta}$: to account for this, we can view all of our order-of-magnitude computations as if they are on the Riemann surface of $H^{-\beta}$.  Then, we can readjust our arguments accordingly when analyzing the final form of our Cauchy integral.  The equations for the pieces of the contour are as follows:
\begin{align*}
\gamma_1(y) &:= \left\{x : |x - p - G(y)| = \frac{1}{r}, \arg(p) \leq \arg(x - p - G(y)) \leq \arg(p) + 2\pi\right\},\\
\gamma_2(y) &:= \left\{x : \frac{1}{r} \leq |x - p - G(y)| \leq \epsilon_x, \arg(x - p - G(y)) = \arg(p) + 2\pi\right\},\\
\gamma_3(y) &:= \left\{x : \frac{1}{r} \leq |x - p - G(y)| \leq \epsilon_x, \arg(x - p - G(y)) = \arg(p) \right\},\\
\gamma_4(y) &:= \left\{x : |x - G(y)| = |p| + \epsilon_x, \arg(p) - \theta_x \leq \arg(x - G(y)) \leq \arg(p)\right\},\\
\gamma_5(y) &:= \left\{x : |x - G(y)| = |p| + \epsilon_x, \arg(p) \leq \arg(x - G(y)) \leq \arg(p) + \theta_x \right\}.
\end{align*}
{Here, $\epsilon_x > 0$ is a small positive constant.}  Note that later on, the change of variables into $(u, v)$ coordinates will allow us to drop the corresponding $G(y)$ term in the contour when sufficiently close to the critical point $(p, q)$.  Now, as $|\theta|$ increases with $|\theta| \geq \frac{\theta_y}{2}$, we would like to find an interpolation of the $x$ quasi-local contour, shrinking it until it no longer wraps around the zero set of $H$. 
To do this, notice that when $y = qe^{i\theta_y t}$ for $t \in \left[-1, -\frac{1}{2}\right] \cup \left[ \frac{1}{2}, 1\right]$, $|p + G(y)| > |p|$ uniformly, since $(p, q)$ is a strictly minimal critical point of $H$.  Therefore, we can find a $\delta > 0$ so that $|p + G(y)| > |p| + \delta$ for every $t \in \left[-1, -\frac{1}{2}\right] \cup \left[ \frac{1}{2}, 1\right]$.  

{For $y = qe^{i \theta}$ with $|\theta| > \frac{\theta_y}{2}$, we linearly interpolate the radius $|x - G(y)|$ in $\gamma_4$ and $\gamma_5$ from $|p| + \epsilon_x$ to $|p| + \delta$ as $|\theta|$ increases from $\frac{\theta_y}{2}$ to $\theta_y$, while correspondingly adjusting each other part of the contour, $\gamma_i$, to form a closed curve as necessary.  This gradually shrinks the quasi-local contour until it no longer wraps around the zero set $\mathcal{V}$.}
%
 We will show that the integrand is small along all parts of this contour, so the details of {how the $\gamma_i$ intersect} are not important.

This completes the description of a possible quasi-local contour near $(p, q)$, but we will morph it slightly so that it is more convenient.  Consider applying the change of variables given in Section \ref{COV}.  Since $v = y$, the $v$ portion of the contour is identical to the $y$ portion of the contour.  Then, since $u = x + \chi_1(v - q) + \chi_2(v - q)^2$, each contour $\gamma_i(y)$ is translated by $\chi_1(v - q) + \chi_2(v - q)^2$, so that it retains its overall shape but is centered at a new location.  Additionally, the parameterization of the zero set $G(y)$ changes to a new parameterization $\kappa(v)$ (discussed below) such that $\tilde H(p + \kappa(v), v) = 0$.  $\kappa(v)$ oscillates slower than $G(y)$ near the critical point.  {(Here, $\tilde H(u, v) = H(x, y)$, as defined in Equation \eqref{HPower}.)}  This will allow us to approximate our quasi-local cycle as a product contour near the critical point $(p, q)$, because we will show that we can drop $\kappa(v)$ from the contour for $y$ close enough to $q$.

In summary, the final quasi-local cycle $\mathcal{C}(p, q)$ (in $(u, v)$-coordinates) near the critical point $(p, q)$ has three regimes.  The contour is an arc in $v$, and wraps around the zero set of $\tilde H$ in $u$.  Let $v = qe^{i \theta}$.  When $\theta \leq r^{-\frac{2}{5}}$, the $u$-contour wraps exactly around the point, $p$, and this portion of the contour is a product contour.  When $r^{-\frac{2}{5}} \leq \theta \leq \frac{\theta_y}{2}$, the contour instead wraps around the point $p + \tilde \kappa(v)$ for a suitably chosen interpolation $\tilde \kappa$.  Finally, if $\theta \geq \frac{\theta_y}{2}$, then the $u$-contour gradually shrinks as $\theta$ increases, until it no longer intersects the zero set of $\tilde H$ at all.

Here, we discuss the parameterization, $\kappa(v)$, and interpolation, $\tilde \kappa(v)$, in more detail.  Using the chain rule, {we have $\tilde H_u(p, q) = H_x(p, q)$}. Since $H_x(p ,q) \neq 0$ by assumption, and since $\tilde H$ is analytic, the implicit function theorem guarantees that there exists a smooth parameterization $\kappa(v)$ of the zero set of $\tilde H$, so that $\tilde H(p + \kappa(v), v) = 0$ for $v$ sufficiently close to $q$.  Investigating $\kappa(v)$ a little further, we use the power series expansion of $\tilde H$ about $(p, q)$, given in Equation \eqref{HPower}, along with the facts that $d_{00} = d_{01} = d_{02} = 0$, to obtain the following:
\[
0 = \tilde H(p + \kappa(v), v) = d_{10} \kappa(v) + O(\kappa(v))^2 + O(v - q)^3.
\]
Thus, $\kappa(v) = O(v - q)^3$.  {However, by comparing $\kappa(v)$ to $G(y)$, we find that $G(y) = -\chi_1(y - q) - \chi_2(y - q)^2 + O(y - q)^3$.}
  This provides more insight into this change of variables: in addition to allowing us to write $H$ as a nice power series with some vanishing coefficients, the change of variables also describes $\mathcal{V}$ near $(p, q)$.  By converting the contour into $(u, v)$-coordinates, we are able to stabilize the $u$ contours, slowing down the movement of the zero set of $H$ when it is parameterized by $v$.  We take advantage of this slow-down by morphing our contour slightly, as described in the following paragraph.

  In order to break the $2$-dimensional Cauchy integral into two one-dimensional integrals, we need the quasi-local contour to be a product contour near the critical point, $(p, q)$.  To achieve this goal, we will need to break into two cases: when $|\theta| \leq r^{-\frac{2}{5}}$ and when $|\theta| > r^{-\frac{2}{5}}$, for $v = qe^{i \theta}$.  Let us first analyze $|v - q|$ in these cases:
\begin{align}
v - q &= qe^{i \theta} - q \nonumber \\
&= qi\theta - \frac{q\theta^2}{2} + O(\theta)^3. \label{v-q}
\end{align}  
  In the second line, we use the power series expansion for $e^{i \theta}$, which holds uniformly as $\theta \to 0$.
  Now, if $|\theta| \leq r^{-\frac{2}{5}}$, then $|(v - q)^3| = O\left(r^{-\frac{6}{5}}\right) < \frac{1}{r}$ for $r$ sufficiently large.  Hence, when $|\theta| \leq r^{-\frac{2}{5}}$, $|\kappa(v)| = O\left(r^{-\frac{6}{5}}\right)$.  Therefore, for $r$ sufficiently large, the point $p + \kappa(v)$ is always within the circle of radius $\frac{1}{r}$ about the point $p$, and we can morph our $u$-contour so that it is centered exactly around the point $p$ instead of the point $p + \kappa(v)$.  Thus, we will drop $\kappa(v)$ from the definitions of all the $\gamma_i$ when $\theta \leq r^{-\frac{2}{5}}$, which means that the $u$ contour no longer depends on $v$ when $\theta \leq r^{-\frac{2}{5}}$.  (Note that this corresponds to a similar shift in the original $(x, y)$-coordinates, which can be computed explicitly to justify that the original torus $T$ can be morphed locally to this new contour.)  The portion of the contour where $\theta \leq r^{-\frac{2}{5}}$ will yield the dominating contribution to the integral asymptotically.

In the other regime, when $\theta \geq r^{-\frac{2}{5}}$, we cannot simply eliminate $\kappa(v)$.  Instead, let $\tilde \kappa(v)$ be $0$ when $\theta \leq r^{-\frac{2}{5}}$, let it be $\kappa(v)$ when $\theta \geq r^{-\frac{7}{20}}$, and let it linearly interpolate between $0$ and $\kappa(v)$ when $r^{-\frac{2}{5}} \leq \theta \leq r^{-\frac{7}{20}}$.  We replace $\kappa(v)$ with $\tilde \kappa(v)$ in the definition of the quasi-local cycle.  Note that $\tilde \kappa(v) = O(v - q)^3$ as $v$ tends to $q$.  We will use this condition much later in the proof.  This completes the description of the quasi-local cycle.

\subsection{Away from the Quasi-Local Cycle} \label{away}

Let us justify that the integral over the quasi-local cycle provides the main contribution to the asymptotics of the coefficients, and that the remainder of the domain of integration contributes negligibly in comparison.  In order to do so, we will find a way to expand the torus $T$ away from the quasi-local cycle so that the integrand decays exponentially faster here when compared to the quasi-local cycle.  To begin, consider the case where there is only one strictly minimal critical point, $(p, q)$.  Formally, by strictly minimal, we mean the following:
\[
\{|x| \leq |p|\} \cap \{|y| \leq |q|\} \cap \mathcal{V} = (p, q).
\]
Here again, $\mathcal{V} = \{(x, y) | H(x, y) = 0\}$.

Consider the torus, $T_{(p, q)} := \{x: |x| = |p|\} \times \{y: |y| = |q|\}$.  From this torus, remove an open neighborhood $\mathcal{N}$ of the point $(p, q)$, where $\mathcal{N}$ is so small that the angular sectors of the torus that it covers in $x$ and $y$ are smaller than the angular sectors of the torus that $\mathcal{C}$ covers in $x$ and $y$.  That is, the $y$ component of $\mathcal{N}$ should only consist of $y$ values whose arguments $|\arg(y) - q| < c < \theta_y$ for some constant $c > 0$.  Similarly, for each $y \in \mathcal{C}$, the arguments of the $x$ values in $\mathcal{N}$ should not vary from $p + G(y)$ more than the arguments of the $x$ values in $\mathcal{C}$.

Now, $T_{(p, q)} \backslash \mathcal{N}$ is a closed set which does not intersect the closed set, $\mathcal{V}$.  Thus, there are open sets dividing these two sets.  This implies that there is a neighborhood of $T_{(p, q)} \backslash \mathcal{N}$ which does not intersect $\mathcal{V}$.  There is some $\delta^* > 0$ such that the $x$ arc of $T_{(p, q)}\backslash \mathcal{N}$ can be expanded by $\delta^*$ without hitting $\mathcal{V}$.

Then, at every point of this new cycle away from the critical point $(p, q)$, we have that $|x| \geq |p| + \delta^*$.  This forces the Cauchy integral to decay exponentially faster away from the critical point than it does near $(p, q)$, proving that the asymptotic contribution to the integral cannot come from $T_{(p + \delta^*, q)}$.

After expanding $T_{(p, q)}$ to $T_{(p + \delta^*, q)} :=  \{x: |x| = |p| + \delta^*\} \times \{y: |y| = |q|\}$, notice that $T_{(p + \delta^*, q)} \backslash \mathcal{N}$ can be connected to the quasi-local cycle $\mathcal{C}$ by adding two short lines at the ends of the $x$ contours connecting the circle of radius $\delta^*$ to the ends of $\gamma_4$ and $\gamma_5$.  Because these lines are contained entirely within the region near $(p, q)$ where the implicit function theorem holds for $G(y)$, the lines cannot hit any zeroes of $H$.  Also, the magnitude of $\left|xy^{\frac{1}{\lambda}}\right|$ along these lines is always greater than the magnitude of $\left|pq^{\frac{1}{\lambda}}\right|$ because $|x| > |p|$, which means that these lines also do not contribute to the asymptotics of the integral, and may be ignored.

\section{Approximating with a Product Integral} \label{productintegral}
With our quasi-local cycle and change of variables defined, we are ready to begin analyzing the Cauchy integral formula, Equation \eqref{BCauchy}.  Writing the quasi-local cycle near $(p, q)$ as $\mathcal{C}(p, q)$, we apply the change of variables to the Cauchy integral formula restricted to $\mathcal{C}(p, q)$ to obtain the following integral:
\begin{equation} \label{ToBeProduct}
\left(\frac{1}{2\pi i}\right)^2 \iint_{\mathcal{C}(p, q)} \tilde H(u, v)^{-\beta} \left(u - \chi_1(v-q) - \chi_2(v - q)^2\right)^{-r - 1} v^{-s - 1} \, \D u \, \D v.
\end{equation}
Here, we used the fact that the Jacobian of the transformation is $1$.  Our goal now is to show that this integral is essentially a product integral.  The following lemma describes this precisely.
%
%
\begin{lemma} \label{product} {The integral in Equation \eqref{ToBeProduct} is asymptotically equivalent to the following:}
\[ \left( \frac{1}{2 \pi i}\right)^2 \iint_{\mathcal{C}_\ell(p, q)} [H_x(p, q) \cdot (u - p)]^{-\beta} u^{-r - 1} v^{-s - 1} \left[1 - \frac{\chi_1 (v - q) + \chi_2 (v - q)^2}{p} \right]^{-r - 1} \,\D u \,\D v.
\]
The above estimate holds as $r, s \to \infty$ with $\lambda = \frac{r + O(1)}{s}$.  Here, $\mathcal{C}_{\ell}(p, q)$ is the portion of $\mathcal{C}(p, q)$ where $|\theta| \leq r^{-\frac{2}{5}}$.  Hence, $\mathcal{C}_{\ell}(p, q)$ is a product contour.
\end{lemma}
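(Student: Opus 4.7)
The plan is to decouple the integrand into a product of a function of $u$ and a function of $v$ via three successive approximations: (a) restrict the domain from $\mathcal{C}(p,q)$ to $\mathcal{C}_\ell(p,q)$; (b) replace $\tilde H(u,v)^{-\beta}$ by its linearization $[H_x(p,q)(u-p)]^{-\beta}$ at $(p,q)$; and (c) replace the coupling factor $(u - \chi_1(v-q) - \chi_2(v-q)^2)^{-r-1}$ by $u^{-r-1}\bigl(1 - (\chi_1(v-q) + \chi_2(v-q)^2)/p\bigr)^{-r-1}$. Each replacement must be shown to introduce only a $1+o(1)$ multiplicative error at the level of the integral.

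For step (a), I would split $\mathcal{C}(p,q)\setminus\mathcal{C}_\ell(p,q)$ into the large-arc pieces of $\gamma_4 \cup \gamma_5$ (where $|u| \geq |p| + \epsilon_x$) and the outer $v$-regime $|\theta| \geq \theta_y/2$, where strict minimality of $(p,q)$ forces $|p + \tilde\kappa(v)| \geq |p| + \delta$. In both sub-regions the factor $|u|^{-r}$ is suppressed by a uniform $(1 + c/|p|)^{-r}$ relative to the leading behavior near $u = p$, while the remaining factors $\tilde H^{-\beta}$, $v^{-s-1}$, and the coupling term all stay polynomially bounded; this is exponentially smaller than the target order $r^{\beta - 3/2}|p|^{-r}|q|^{-s}$ and may be dropped from both the original and the approximated integrals.

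For steps (b) and (c), use the power series of Equation~\eqref{HPower} with $d_{00} = d_{01} = d_{02} = 0$ to write $\tilde H(u,v) = H_x(p,q)(u-p)\bigl[1 + R(u,v)\bigr]$, where the worst term in $R$ is $d_{03}(v-q)^3/(H_x(p,q)(u-p))$. Setting $A = \chi_1(v-q) + \chi_2(v-q)^2 = O(r^{-2/5})$, a direct expansion gives $\log[(1-A/u)/(1-A/p)] = A(u-p)/(up) + O(A^2)$, so the error introduced when raising to the $-r-1$ power scales like $(r+1)A(u-p)/(up)$. On $\gamma_1$, where $|u-p| = 1/r$ and $|v-q| = O(r^{-2/5})$, the quantity $R$ and this exponent-error are $O(r^{-1/5})$ and $O(r^{-2/5})$ respectively, so both replacements are uniformly $1 + o(1)$ on $\gamma_1$.

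The main obstacle is controlling $\gamma_2$ and $\gamma_3$, where $|u-p|$ ranges up to $\epsilon_x$ and neither replacement is pointwise $1 + o(1)$. The resolution is that on $\gamma_2 \cup \gamma_3$ we have $|u|^{-r} = (|p| + t)^{-r} \leq |p|^{-r} e^{-rt/|p|}$ with $t = |u-p|$, which concentrates the effective support at $t = O(1/r)$. Splitting these contours at an intermediate scale such as $t = r^{-4/5}$, the inner segment $t \leq r^{-4/5}$ inherits bounds like those on $\gamma_1$, making the approximations $1 + o(1)$ pointwise; on the outer segment $t \geq r^{-4/5}$, the exponential suppression $e^{-rt/|p|} \leq e^{-r^{1/5}/|p|}$ is super-polynomially small and applies to both the original and approximated integrands uniformly, so both contributions are negligible. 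Combining these three justifications transforms Equation~\eqref{ToBeProduct} into the product integral in the statement.
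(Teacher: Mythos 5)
Your treatment of the region near the critical point is essentially the paper's: writing $\tilde H = H_x(p,q)(u-p)[1+R]$ and comparing $(1-A/u)^{-r-1}$ with $(1-A/p)^{-r-1}$ is exactly the paper's decomposition into the correction factors $L$ and $K$, and your device of splitting $\gamma_2\cup\gamma_3$ at an intermediate scale and using $|u|^{-r}=(|p|+t)^{-r}$ to kill the outer segment matches the paper's Case 2 (the paper cuts at $|u-p|=r^{-7/10}$ rather than $r^{-4/5}$, but either works). The genuine gap is in step (a). Your decomposition of $\mathcal{C}(p,q)\setminus\mathcal{C}_\ell(p,q)$ into the large arcs of $\gamma_4\cup\gamma_5$ and the outer regime $|\theta|\geq\theta_y/2$ omits the intermediate regime $r^{-2/5}\leq|\theta|\leq\theta_y/2$ with $u$ on $\gamma_1\cup\gamma_2\cup\gamma_3$, where the $u$-contour still hugs the zero set at $p+\tilde\kappa(v)$. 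In that regime $|u|$ is within $O(1/r)+O(|\theta|^3)$ of $|p|$ (indeed $|u|<|p|$ on the inner half of $\gamma_1$), and $|v|=|q|$ exactly, so there is no modulus suppression of the form $(1+c/|p|)^{-r}$ available from either factor; the claimed ``uniform suppression of $|u|^{-r}$'' simply does not hold there.

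The negligibility of that regime is where the hypothesis $\re(-q^2M)>0$ must enter, and your proposal never invokes it. The paper (Lemma \ref{vbig1}, Case 1) writes the non-$\tilde H$ part of the integrand as $p^{-r}q^{-r/\lambda}(\cdots)e^{-r\varphi(u,v)}$ with $\varphi(u,v)=\log\bigl(\tfrac1p[u-\chi_1(v-q)-\chi_2(v-q)^2]\bigr)+\lambda^{-1}\log(v/q)=\tfrac1p(u-p)+\tfrac M2(v-q)^2+\cdots$ as in Equation \eqref{phidef}, and uses $d=\re(-q^2M/2)>0$ to conclude $\re\varphi\geq\tfrac d2\theta^2\geq\tfrac d2 r^{-4/5}$, hence $|e^{-r\varphi}|\leq e^{-\frac d2 r^{1/5}}$ --- a super-polynomial but not geometric suppression, which together with $\tilde H^{-\beta}=O(r^{|\beta|})$ suffices. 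You need an argument of this type to close step (a). Two smaller points: the relevant modulus on $\gamma_4\cup\gamma_5$ is $|u-A|$, not $|u|$ (with $A=\chi_1(v-q)+\chi_2(v-q)^2$ of size $O(\theta_y)$ in the outer regime), which is why the paper controls $\arg(u-p-\tilde\kappa(v))$ rather than just $|u|$; and the log-difference in step (c) is $O(A^2(u-p))$, not $O(A^2)$ --- as literally written, $r\cdot O(A^2)=O(r^{1/5})$ would not be $o(1)$, though the corrected bound rescues your estimate.
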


The proof of this lemma involves two types of statements: near the critical point, where $|u - p|$ and $|v - q|$ are both sufficiently small, we will argue that the integrands are asymptotically the same.  Away from the critical point, where at least one of $|u - p|$ or $|v - q|$ is sufficiently large, we will show that both integrands are small, and hence do not contribute asymptotically to either integral.  (In the second integral, we need only show that the integrand is small when $|u - p|$ is large, since $|v - q|$ is always small in $\mathcal{C}_\ell(p, q)$.)

\subsection{{When} $|u - p|$ and $|v - q|$ are Small}

In order to match the two integrands when $|v - q|$ and $|u - p|$ are small, we rewrite the original:
\begin{multline*}
\tilde H(u, v)^{-\beta} \left(u - \chi_1(v - q) - \chi_2 (v - q)^2\right)^{-r - 1} v^{-s - 1}
\\
 = [H_x(p, q) \cdot (u - p)]^{-\beta} u^{-r - 1} v^{-s - 1} \left[1 - \frac{\chi_1 (v - q) + \chi_2 (v - q)^2}{p} \right]^{-r - 1}  K(u, v) L(u, v).
\end{multline*}
Here, $K$ and $L$ are correction factors with the following definitions:
\[
K(u, v) := \left( \frac{1 - \frac{\chi_1 (v - q) + \chi_2 (v - q)^2}{u}}{1 - \frac{\chi_1 (v - q) + \chi_2 (v - q)^2}{p}}\right)^{r - 1},  \ \ \ \ \ 
L(u, v) := \left[ \frac{\tilde H(u, v)}{H_x(p, q)(u - p)}\right]^{-\beta}.
\]
Thus, our goal is to show that $K$ and $L$ are asymptotically {equivalent} to $1$.  We can analyze $K$ and $L$ along each part of the contour where $(u - p)$ and $(v - q)$ are small, and verify that in every case, they are asymptotically equal to $1 + o(1)$ as $r, s \to \infty$ with $\lambda = \frac{r + O(1)}{s}$.  Explicitly, we will show this for $u$ in $\gamma_1$, and for the parts of $\gamma_2$ and $\gamma_3$ sufficiently close to the critical point.  {Recall that $\mathcal{C}_y$ is the $v$-portion of the quasi-local contour, shown on the left in Figure \ref{fig:contour}, where $v = qe^{i \theta}$ for $|\theta| \leq \theta_y$ with $\theta_y > 0$ a small constant.}

\begin{lemma} \label{Kg1}
Assume $v \in \mathcal{C}_y$ with $|\theta| \leq r^{-\frac{2}{5}}$.  Also, assume that either $u \in \gamma_1$, or that $u \in \gamma_2 \cup \gamma_3$ with $u = p + \frac{\omega t}{r}$ and $t \leq r^{\frac{3}{10}}$.  Then, the following holds uniformly as $r, s \to \infty$ with $\lambda = \frac{r + O(1)}{s}$:
\[
K(u, v) = 1 + o(1).
\]
\end{lemma}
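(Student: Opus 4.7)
The plan is to show $\log K(u,v) = o(1)$ by isolating the algebraic structure of the ratio and expanding in powers of the small quantities $A(v)/u$ and $A(v)/p$, where $A(v) := \chi_1(v-q) + \chi_2(v-q)^2$. Rewriting,
\[
K(u,v) = \left(\frac{1 - A(v)/u}{1 - A(v)/p}\right)^{r-1},
\]
and noting that $K(u,q) = 1$ identically (because $A(q)=0$), I would define the power via continuity from $v=q$ using the principal branch of the logarithm, which is unambiguous once $|A(v)/u|, |A(v)/p| < 1$.

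First I would collect the size estimates. From Equation \eqref{v-q}, $|v - q| = O(|\theta|) = O(r^{-2/5})$, hence $|A(v)| = O(r^{-2/5})$. The hypotheses on $u$ give $|u - p| = 1/r$ on $\gamma_1$, and $|u-p| = t/r \leq r^{-7/10}$ on the admissible portion of $\gamma_2 \cup \gamma_3$; in either case $|u - p| = O(r^{-7/10})$, and $|u|$ stays bounded away from $0$ for $r$ large because $p \neq 0$. Then I would Taylor expand each logarithm and write
\[
\log K(u,v) = (r-1)\sum_{k \geq 1} \frac{1}{k}\left[\left(\frac{A(v)}{p}\right)^k - \left(\frac{A(v)}{u}\right)^k\right],
\]
and observe that each summand factors as $A(v)^k\,(u^k - p^k)/(u^k p^k)$, of magnitude $O(|A(v)|^k \cdot |u-p|)$ via the telescoping identity $u^k - p^k = (u - p)(u^{k-1} + \cdots + p^{k-1})$ together with the uniform lower bound on $|u|$ and $|p|$.

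Finally comes the bookkeeping. The $k$-th term of $\log K$ has magnitude $O(r \cdot r^{-2k/5} \cdot r^{-7/10}) = O(r^{3/10 - 2k/5})$. For $k = 1$ this is $O(r^{-1/10})$, and for $k \geq 2$ it is $O(r^{-1/2})$ or smaller, hence summable. Therefore $|\log K(u,v)| = O(r^{-1/10})$ uniformly, and $K(u,v) = \exp(O(r^{-1/10})) = 1 + o(1)$, as required. I expect the main obstacle to be not the analytic estimates, which are routine once the expansion is set up, but rather the tight arithmetic: the exponents $-2/5$ and $-3/10$ chosen earlier for $|\theta|$ and $t$ are precisely tuned so that the leading bound $r^{1 - 2/5 - 7/10} = r^{-1/10}$ is $o(1)$, and this is what makes the product-integral approximation work at all. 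Confirming that these exponents leave enough margin (and that the $O(\cdot)$ constants can be taken uniform in $(u,v)$ over the specified region and uniform in the regime $\lambda = (r+O(1))/s$) is the real content of the lemma.
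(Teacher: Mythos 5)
Your proof is correct and follows essentially the same route as the paper: both arguments reduce to showing that the base of $K$ differs from $1$ by $O\!\left(|A(v)|\cdot|u-p|\right)=O\!\left(r^{-2/5}\cdot r^{-7/10}\right)=O\!\left(r^{-11/10}\right)$, which beats the exponent of order $r$ and gives $\log K=O\!\left(r^{-1/10}\right)=o(1)$. The paper organizes the algebra by expanding $1/u$ about $1/p$ as a geometric series in $\left(1-\tfrac{u}{p}\right)$ rather than expanding the two logarithms separately, but this is only a cosmetic difference.
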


\begin{proof}
We pull aside the numerator of $K(u, v)$:
\[
1 - \frac{\chi_1(v - q) + \chi_2(v - q)^2}{u} = 1 - \frac{\chi_1(v - q) + \chi_2(v - q)^2}{p} \cdot \frac{1}{1 - \left(1 - \frac{u}{p}\right)}.
\]
For $u \in \gamma_1$, $|u - p| = \frac{1}{r}$.  Thus, we have $\left(1 - \frac{u}{p}\right) = O\left(r^{-1}\right)$, and $\left|1 - \frac{u}{p}\right| < 1$ for $r$ sufficiently large.  Hence, we can expand $\frac{1}{1 - \left(1 - \frac{u}{p}\right)}$ as a uniformly convergent geometric series for all $u \in \gamma_1$.  This yields the following:
%
%
\begin{multline} \label{KNum}
1 - \frac{\chi_1(v - q) + \chi_2(v - q)^2}{u} \\
= 1 - \frac{\chi_1 (v - q) + \chi_2(v - q)^2}{p}\left[1 + \left(1 - \frac{u}{p}\right) + \left(1 - \frac{u}{p}\right)^2 + \cdots \right].
\end{multline}
Now, we can replace the numerator in the base of $K$ by the expression in Equation \eqref{KNum} to obtain the following:
\begin{equation} \label{KTaylor}
\frac{1 - \frac{\chi_1(v - q) + \chi_2(v - q)^2}{u}}{1 - \frac{\chi_1(v - q) + \chi_2(v - q)^2}{p}} = 1 - \frac{\frac{\chi_1(v - q) +\chi_2(v - q)^2}{p}}{1 - \frac{\chi_1(v - q) + \chi_2(v - q)^2}{p}} \left[\left(1 - \frac{u}{p}\right) + O\left(1 - \frac{u}{p}\right)^2\right].
\end{equation}
%
%
Equation \eqref{KTaylor} holds uniformly for $|\theta| \leq r^{-\frac{2}{5}}$ and $u$ in the region of the lemma, as $r \to \infty$.  Between $\gamma_1$ and the regions of $\gamma_2$ and $\gamma_3$ described in the lemma, $\left(1 - \frac{u}{p}\right) = O\left(r^{-\frac{7}{10}}\right)$.  Also, from Equation \eqref{v-q}, $|v - q| = O\left(r^{-\frac{2}{5}}\right)$.  Plugging these facts into Equation \eqref{KTaylor} yields the following:
\[
\frac{1 - \frac{\chi_1(v - q) + \chi_2(v - q)^2}{u}}{1 - \frac{\chi_1(v - q) + \chi_2(v - q)^2}{p}} = 1 + O\left(r^{-\frac{11}{10}}\right).
\] 
We replace the base of $K$ with this new expression {and use the Taylor series for the natural logarithm} to obtain:
\[
K(u, v) = \left(1 + O\left(r^{-\frac{11}{10}}\right)\right)^{-r - 1} = e^{(-r - 1) \ln \left(1 + O\left(r^{-\frac{11}{10}}\right)\right)} = e^{(-r - 1) \cdot O\left(r^{-\frac{11}{10}}\right)} = 1 + o(1).
\]
\end{proof}

Next, we prove the corresponding statement for $L(u, v)$ on $\gamma_1$ and the parts of $\gamma_2$ and $\gamma_3$ sufficiently close to $p$.

\begin{lemma}
Assume $v \in \mathcal{C}_y$ with $|\theta| \leq r^{-\frac{2}{5}}$.  Also, assume either that $u \in \gamma_1$, or that $u \in \gamma_2 \cup \gamma_3$ with $u = p + \frac{\omega t}{r}$ and $t \leq r^{\frac{3}{10}}$.  Then, the following holds uniformly as $r, s \to \infty$ with $\lambda = \frac{r + O(1)}{s}$:
\[
L(u, v) = 1 + o(1).
\]
\end{lemma}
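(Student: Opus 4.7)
The plan is to use the power series expansion of $\tilde H$ around $(p,q)$ from Equation \eqref{HPower}, together with the vanishing identities $d_{00} = d_{01} = d_{02} = 0$ established just after that equation and the chain-rule identity $d_{10} = \tilde H_u(p,q) = H_x(p,q)$. I would split the series into its $m \geq 1$ and $m = 0$ parts to write
\[
\tilde H(u,v) \;=\; H_x(p,q)\,(u-p)\,\bigl[1 + R(u,v)\bigr] + S(v),
\]
where $R(u,v) = O(u-p) + O(v-q)$ collects the normalized $m \geq 1$ correction terms after dividing by $d_{10}(u-p)$, and $S(v) = \sum_{n \geq 3} d_{0n}(v-q)^n = O((v-q)^3)$ is the pure-$v$ tail. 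Dividing through gives
\[
\frac{\tilde H(u,v)}{H_x(p,q)(u-p)} \;=\; 1 + R(u,v) + \frac{S(v)}{H_x(p,q)\,(u-p)}.
\]

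Next I would bound each summand with the hypotheses of the lemma. From Equation \eqref{v-q}, $|v-q| = O(r^{-2/5})$; on the regions named in the lemma, $|u-p|$ lies in the interval $[1/r,\; r^{-7/10}]$ (equal to $1/r$ on $\gamma_1$, and bounded by $r^{-7/10}$ because $t \leq r^{3/10}$ on $\gamma_2 \cup \gamma_3$). Consequently $R(u,v) = O(r^{-2/5})$, while the potentially troublesome quotient obeys
\[
\left|\frac{S(v)}{H_x(p,q)\,(u-p)}\right| \;=\; O\!\left(\frac{r^{-6/5}}{r^{-1}}\right) \;=\; O(r^{-1/5}),
\]
with the worst case occurring on $\gamma_1$ where $|u-p|$ is smallest. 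Combining these, $\tilde H(u,v)/(H_x(p,q)(u-p)) = 1 + o(1)$ uniformly in $(u,v)$ as $r,s \to \infty$ with $\lambda = (r+O(1))/s$.

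The final step is to raise this ratio to the power $-\beta$. Since the ratio remains in a small disk around $1 \in \mathbb{C}$ uniformly, it is bounded away from the branch ray of $z \mapsto z^{-\beta}$, so the principal branch applies locally and
\[
L(u,v) \;=\; \exp\!\bigl(-\beta \log(1 + o(1))\bigr) \;=\; 1 + o(1),
\]
which agrees with the globally fixed branch by continuity. I expect the main obstacle to be the bound on $S(v)/(u-p)$: the exponents $r^{-2/5}$ for $|\theta|$ in the $v$-contour and $1/r$ for the inner radius of $\gamma_1$ are calibrated precisely so that $|v-q|^3/|u-p|$ still tends to zero. Once this tight estimate is in hand, the branch considerations are painless, since $\tilde H(u,v)/(u-p) \to H_x(p,q) \neq 0$ as $(u,v) \to (p,q)$ and so the quantity being exponentiated never winds around the origin as $u$ traverses the small circle $\gamma_1$.
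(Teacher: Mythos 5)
Your proposal is correct and follows essentially the same route as the paper: expand $\tilde H$ in the power series of Equation \eqref{HPower}, use $d_{00}=d_{01}=d_{02}=0$ and $d_{10}=H_x(p,q)$, and bound the normalized correction terms by $O(r^{-7/10})$, $O(r^{-2/5})$, and $O\bigl((v-q)^3/(u-p)\bigr)=O(r^{-1/5})$ on the stated region, the last being the binding term exactly as you identify. The only cosmetic difference is that you fold the $m\geq 1$ corrections into a single factor $1+R(u,v)$ and isolate the pure-$v$ tail $S(v)$, whereas the paper splits the remainder into three pieces $f+g+h$; the estimates and conclusion $L=(1+O(r^{-1/5}))^{-\beta}=1+o(1)$ are identical.
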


\begin{proof}
Recall that $\tilde H(u, v)$ has a particularly nice power series, given in Equation \eqref{HPower}:
\[
\tilde H(u, v) = \sum_{m, n \geq 0} d_{mn} (u - p)^m (v - q)^n.
\]
In this series, we have the restrictions, $d_{00} = d_{01} = d_{02} = 0$.  Hence, we can express $\tilde H$ in the following manner:
\begin{equation} \label{HApprox}
\tilde H = d_{10} (u - p) + f(u, v) + g(u, v) + h(u, v).
\end{equation}
{Here, we choose $f, g,$ and $h$ to be any functions satisfying Equation \eqref{HApprox} such that $f(u, v) = O(u - p)^2, g(u, v) = O\left((u - p)(v - q)\right),$ and $h(u, v) = O(v - q)^3$, each uniformly as $(u, v)$ approaches $(p, q)$.  Also, we recall that in the power series expansion of $\tilde H$, $d_{10} = H_x(p, q)$.}

We now plug Equation \eqref{HApprox} into the definition of $L$:
\begin{align}
L(u, v) &:= \left[ \frac{\tilde H(u, v)}{H_x(p, q) (u - p)}\right]^{-\beta} \nonumber \\
&= \left[ 1 + \frac{f + g + h}{H_x(p, q)(u - p)}\right]^{-\beta}. \label{LApprox}
\end{align}
In the region described in this Lemma, we have the restrictions, $\frac{1}{r} \leq |u - p| \leq r^{-\frac{7}{10}}$, and $|v - q| = O\left(r^{-\frac{2}{5}}\right)$.  Thus, we obtain the following expressions:
\begin{align*}
\frac{{f(u, v)}}{H_x(p, q) (u - p)} &= O(u - p) = O\left(r^{-\frac{7}{10}}\right),\\
\frac{{g(u, v)}}{H_x(p, q)(u - p)} &= O(v - q) = O\left(r^{-\frac{2}{5}}\right),\\
\frac{{h(u, v)}}{H_x(p, q)(u - p)} &= O\left( \frac{(v - q)^3}{u - p}\right) = O\left(r \cdot \left(r^{-\frac{2}{5}}\right)^3\right) = O\left(r^{-\frac{1}{5}}\right).
\end{align*}
Each of these statements holds uniformly over the region in the lemma as $r \to \infty$.  Plugging these into Equation \eqref{LApprox} above yields the desired result:
\[
L(u, v) = \left[1 + O\left(r^{-\frac{1}{5}}\right)\right]^{-\beta} = 1 + o(1).
\]
\end{proof}

This completes the proof that our integrand is essentially a product integrand near the critical point.  It remains to show that the contributions away from the critical point are negligible.

\subsection{{When} $(u - p)$ or $(v - q)$ is Big}
In order to finish justifying that the {integral in Equation \eqref{ToBeProduct}} is approximately a product integral, we must show both that the {integral in Equation \eqref{ToBeProduct}} and the product integral {in Lemma \ref{product}} are small away from the critical point, $(p, q)$.  We show this by approximating the magnitude of the integrands on the portion of the contours away from $(p, q)$, and seeing that the contributions to the integral away from the critical point are negligible in comparison to the final approximation for the coefficients of $H^{-\beta}$.  We look at the integrals separately.
%
%
%
%
%

\begin{lemma} \label{vbig1}
Let $\mathcal{\bar C}(p, q)$ represent the portion of $\mathcal{C}(p, q)$ where at least one of the following conditions holds: $|\theta| > r^{-\frac{2}{5}}$ or $|u - q| \geq r^{-\frac{7}{10}}$.  Then, the following holds uniformly as $r, s \to \infty$ with $\lambda = \frac{r + O(1)}{s}$ {for some constant $d > 0$}:
\begin{align*}
\left(\frac{1}{2 \pi i}\right)^2 \iint_{\mathcal{\bar C}(p, q)} \tilde H(u, v)^{-\beta} \left(u - \chi_1(v - q) - \chi_2(v - q)^2\right)^{-r - 1}&v^{-s - 1} \, \D u \, \D v \\
&= O\left(p^{-r}q^{-s} r^{|\beta|} e^{-\frac{d}{2} r^{\frac{1}{5}}}\right).
\end{align*}
\end{lemma}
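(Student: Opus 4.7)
The plan is to partition the region $\bar{\mathcal{C}}(p,q)$ into three sub-regions according to the position of $v=qe^{i\theta}$ on $\mathcal{C}_y$ and the location of $u$, and to show exponential decay of the integrand in each. Throughout we use that on the Riemann surface, $|\tilde H(u,v)^{-\beta}| = O(r^{|\beta|})$ uniformly on $\mathcal{C}(p,q)$ (the worst case being $u$ on $\gamma_1$, where $|\tilde H| \asymp 1/r$), and $|v^{-s-1}| \le C |q|^{-s-1}$ since $|v|=|q|$. Converting back to $x = u - \chi_1(v-q) - \chi_2(v-q)^2$, it suffices to show $|x|^{-r-1} \le |p|^{-r-1} e^{-d r^{1/5}}$ for a suitable $d > 0$ in each sub-region.

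The three sub-regions are:
\textbf{(A)} $|\theta| \le r^{-2/5}$ with $|u-p| \ge r^{-7/10}$. Since $v-q = O(r^{-2/5})$, $x$ is close to $u$, and we can split into the straight tails $\gamma_2,\gamma_3$ and the outer arcs $\gamma_4,\gamma_5$. On $\gamma_2,\gamma_3$, $u-p = \omega t/r$ with $t \ge r^{3/10}$ and $\omega$ collinear with $p$, so $|u| \ge |p|+(t-o(t))/r$ and hence $|x|^{-r} \le |p|^{-r} e^{-c t} \le |p|^{-r} e^{-c r^{3/10}}$ which is more than enough. On $\gamma_4,\gamma_5$, $|u|$ exceeds $|p|$ by a fixed positive amount and we obtain decay $e^{-c r}$.
\textbf{(B)} $r^{-2/5} \le |\theta| \le \theta_y/2$. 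Here $u = p + \tilde\kappa(v) + O(1/r)$, and since $\tilde\kappa(v) - \kappa(v) = O((v-q)^3)$ and $\kappa(v) = G(y) + \chi_1(v-q)+\chi_2(v-q)^2$, we get $x = p + G(y) + O(1/r) + O((v-q)^3)$. The key input is that the height function $f(\theta) := \log|p+G(qe^{i\theta})| + \frac{1}{\lambda}\log|q|$ attains a strict local minimum at $\theta=0$: its first derivative vanishes by the critical-point equation (the imaginary part of $q H_y / (p H_x)$ equals $\operatorname{Im}(1/\lambda) = 0$), and its second derivative is strictly positive by the hypothesis that $\operatorname{Re}(-q^2 M) > 0$. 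Hence $|x|^{-r}|v|^{-s} \le |p|^{-r}|q|^{-s} e^{-c r \theta^2} \le |p|^{-r}|q|^{-s} e^{-c r^{1/5}}$ for $|\theta| \ge r^{-2/5}$.
\textbf{(C)} $\theta_y/2 < |\theta| \le \theta_y$. By the construction of the quasi-local cycle, $|u - G(y)|$ is bounded below by $|p| + \delta$ for some fixed $\delta > 0$, so $|x| \ge |p| + \delta'$ uniformly, yielding $|x|^{-r} \le |p|^{-r} e^{-c r}$.

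In each sub-region the integrand is therefore bounded by $|p|^{-r-1}|q|^{-s-1} r^{|\beta|} e^{-d r^{1/5}}$ for a common positive $d$. Integrating over $\bar{\mathcal{C}}(p,q)$, whose arclength is $O(1)$, and absorbing the polynomial factor $r^{|\beta|}$ into a slightly smaller exponent $d/2$ gives the claimed estimate $O(p^{-r} q^{-s} r^{|\beta|} e^{-\frac{d}{2} r^{1/5}})$.

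The main obstacle I anticipate is region (B): rigorously establishing $f''(0) > 0$ from the hypothesis on $M$ and justifying the quadratic lower bound uniformly for all $|\theta| \le \theta_y/2$ (not merely in a shrinking neighborhood of $0$), as well as carefully controlling the discrepancy $\tilde\kappa - \kappa$ in the transitional window $r^{-2/5} \le |\theta| \le r^{-7/20}$. Regions (A) and (C) are comparatively routine once the tails of the Hankel contour and the shrinkage of the outer arcs are parametrized explicitly.
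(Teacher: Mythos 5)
Your overall strategy is the same as the paper's: bound $\tilde H^{-\beta}$ by $O(r^{|\beta|})$ using $|\tilde H| \gtrsim |u - p - \tilde\kappa(v)| \geq 1/r$, use $|v| = |q|$ exactly, and then extract exponential decay from the remaining factor by a case analysis over the pieces of the contour, with $\re(-q^2M)>0$ supplying the decay in $\theta$ and the geometry of $\gamma_2,\ldots,\gamma_5$ supplying the decay in $u$. The one step that does not survive scrutiny as written is in your region (A), on the tails $\gamma_2,\gamma_3$: from $|u| \geq |p| + t/r$ you conclude $|x|^{-r} \leq |p|^{-r}e^{-ct}$ on the grounds that ``$x$ is close to $u$.'' But $x - u = -\chi_1(v-q) - \chi_2(v-q)^2 = O(\theta)$, and for $t$ near its lower limit $r^{3/10}$ the radial gap $t/r = r^{-7/10}$ is far smaller than the allowed $|\theta| \leq r^{-2/5}$; so ``close'' is not close enough, and a priori the shift could pull $x$ strictly inside the circle $|x| = |p|$. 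What saves the argument --- and what the paper's proof makes explicit by expanding the combined phase $\varphi(u,v) = \log(x/p) + \lambda^{-1}\log(v/q)$ --- is that the first-order shift is purely tangential: $\chi_1 q/p = 1/\lambda$ is real, so $\chi_1(v-q)/p = i\theta/\lambda + O(\theta^2)$ contributes nothing to $\log|x/p|$ at order $\theta$, and the order-$\theta^2$ contribution is exactly $\re\left(-q^2M/2\right)\theta^2 \geq 0$ by hypothesis. Only after this cancellation does $\re(\bar u/p) = t/(r|p|)$ dominate the remaining $O(\theta^3) = O(r^{-6/5})$ and $O(\theta \bar u)$ errors. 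You invoke the $M$-hypothesis in region (B) but not here, and here it (together with the identity $\chi_1 = p/(\lambda q)$) is equally indispensable.

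Two smaller points. First, your region (B) only treats $u$ on $\gamma_1$ (i.e., $u = p + \tilde\kappa(v) + O(1/r)$), so the combination of $r^{-2/5} \leq |\theta| \leq \theta_y/2$ with $u$ on $\gamma_2,\ldots,\gamma_5$ falls through the cracks of your partition; the arguments you give in (A) and (C) do extend to those pieces (as in the paper's Cases 2 and 3, which are stated for all relevant $\theta$), but this needs saying. Second, the obstacle you anticipate in (B) is not really there: $f''(0) = \re(-q^2M)$ follows directly from the Taylor expansion of $\varphi$ restricted to the variety, and uniformity of the quadratic lower bound over $|\theta| \leq \theta_y/2$ is obtained simply by choosing the constant $\theta_y$ small enough that the cubic error is dominated, which is exactly how the paper handles it.
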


\begin{proof}
We bound the terms of the integrand separately.  First, recall the nice power series, $\tilde H(u, v) = \sum_{m, n \geq 0} d_{mn} (u - p)^m(v - q)^n$, with the relations, $d_{00} = d_{01} = d_{02} = 0$ and $\tilde H(p + \kappa(v), v) = 0$.  {Recall that $\kappa(v)$ is the parameterization of the zero set of $\tilde H(u, v)$ near $(p, q)$, and that $\tilde \kappa(v)$ is the interpolation of this parameterization that is zero sufficiently close to $(p, q)$ and $\kappa(v)$ further from $(p, q)$, as described in Section \ref{QLC}.}  Define $\bar u$ by $\bar u = u - p - \tilde \kappa(v)$ and $\bar v$ by $\bar v = v - q$.  $\tilde H(p + \kappa(v), v)$ can be represented as follows:
\[
0 = \tilde H(p + \kappa(v), v) = d_{10} \kappa(v) + d_{11} \kappa(v) \bar v + d_{20} \kappa(v)^2 + d_{03} \bar v^3 + \cdots.
\]
With this in mind, we {plug the point $(p + \tilde \kappa(v) + \bar u, v)$ into the power series of $\tilde H$ and extract the portion corresponding to $\tilde H(p + \kappa(v), v)$:}
\begin{align*}
\tilde H(p + \tilde \kappa(v) + \bar u, v) =&\   \tilde H(p + \kappa(v), v) + d_{10}\big([\tilde \kappa - \kappa](v) + \bar u\big)\\
& + O\big([\tilde \kappa - \kappa](v)\big)^2 + O(\bar u)^2 + O\big([\tilde \kappa - \kappa](v) \bar u\big)\\
& + O\big(\kappa(v) [\tilde \kappa - \kappa](v)\big) + O\big(\kappa(v) \bar u\big)\\
& + O\big([\tilde \kappa - \kappa](v) \bar v\big) + O\big(\bar u \bar v\big).
\end{align*}

Recall that $[\tilde \kappa - \kappa](v) = O\left(\bar v^3\right)$ and $\kappa(v) = O\left(\bar v^3\right)$.  For $|\theta| \leq r^{-\frac{7}{20}}, \bar v = O\left( r^{-\frac{7}{20}}\right)$ by Equation \eqref{v-q}, so that $[\tilde \kappa - \kappa](v) = O\left(r^{-\frac{21}{20}}\right)$.  However, for $|\theta| \geq r^{-\frac{7}{20}},$ $\tilde \kappa$ is exactly $\kappa$.  Thus, $[\tilde \kappa - \kappa](v) = O\left(r^{-\frac{21}{20}}\right)$ for all $|\theta| \leq \theta_y$.  Additionally, $|\bar u| \geq \frac{1}{r}$ on all parts of $\mathcal{\bar C}$.  Therefore, for $\epsilon_x$, $\theta_y$, and $\theta_x$ sufficiently small, all terms in the expansion of $\tilde H$ are negligible except $d_{10} \bar u$.  Since $|\bar u| \geq \frac{1}{r}$ and {since $H$ is bounded on compact sets}, we have the following bound uniformly on $\mathcal{\bar C}$:
\begin{equation} \label{HBound}
\tilde H^{-\beta} = O\left(r^{|\beta|}\right).
\end{equation}

Now, we turn to the remaining part of the integrand.  Using the relation, $s = \frac{r}{\lambda} + O(1)$ as $r, s \to \infty$, we have the following:
\begin{multline} 
\left(u - \chi_1(v - q) - \chi_2(v - q)^2\right)^{-r-1} v^{-s-1} \\
= p^{-r} q^{-\frac{r}{\lambda}} \left(u - \chi_1(v - q) - \chi_2(v - q)^2\right)^{-1} v^{-1 + O(1)} e^{-r \varphi(u, v)}.
\label{integrand}
\end{multline}
Here, we also took a factor of $p^{-r}$ out of $\left(u - \chi_q(v - q) - \chi_2(v - q)^2\right)^{-r}$ and a factor of $q^{-\frac{r}{\lambda}}$ out of $v^{-\frac{r}{\lambda}}$.  $\varphi$ is defined by $\varphi(u, v) = \ln \left( \frac{1}{p}\left[u - \chi_1(v - q) - \chi_2(v - q)^2\right]\right) + \lambda^{-1} \ln\left[\frac{v}{q}\right]$.  We can expand $\varphi$ as a bivariate power series:
\begin{equation} \label{phidef}
\varphi(u, v) = \frac{1}{p}(u - p) + \frac{M}{2}(v - q)^2 + O\big((u - p)(v - q)\big) + O(u - p)^2 + O(v - q)^3.
\end{equation}
This equation holds uniformly as $(u, v)$ approaches $(p, q)$. {$M$ is a constant in terms of the derivatives of $H$, as defined in the statement of Theorem \ref{MainResult}.}  Rewriting $\varphi(u, v)$ in terms of $\tilde \kappa, \bar u,$ and $\bar v = qi\theta + O(\theta)^2$ gives the following as $\bar u, \bar v \to 0$:
\begin{align*}
\varphi(u, v) &= \varphi(p + \tilde \kappa(v) + \bar u, q + c \bar v)\\
 &= \frac{1}{p}(\tilde \kappa(v) + \bar u) + \frac{M}{2}\left(\bar v\right)^2 + O\big((\tilde \kappa (v) + \bar u)(\bar v)\big) + O(\tilde \kappa (v) + \bar u)^2 + O(\bar v)^3\\
&=  \frac{1}{p} \bar u + \frac{M}{2}\left( \bar v\right)^2 + O(\bar u \bar v) + O(\bar u)^2 + O(\bar v)^3\\
&= \frac{1}{p} \bar u - \frac{q^2M}{2} \theta^2 + O(\theta \bar u) + O(\bar u)^2 + O(\theta)^3.
\end{align*}

  From here, our goal is to bound $e^{-r \varphi}$ in magnitude.  To do so, we will investigate the real part of $\varphi$.  Let $d = \re\left( -\frac{q^2 M}{2}\right)$, which is a strictly positive number by assumption.  We break into cases now.\\

\begin{itemize}

\hrule
\item[{\bf Case 1:}]
Consider the case where $u$ is close to the critical point $p$, in the sense that either $u \in \gamma_1$ and $|\theta| \geq r^{-\frac{2}{5}}$, {or $u \in \gamma_2$ or $\gamma_3$ and $|\bar u| \leq r^{-\frac{7}{10}}$ but $|\theta| \geq r^{-\frac{2}{5}}$.  Either $|\bar u| = \frac{1}{r}$ for $u \in \gamma_1$, which is much smaller than $\theta^2$, or $\frac{1}{p} \bar u$ is a strictly positive real number if $u \in \gamma_2 \cup \gamma_3$.  In either case, $u$ at worst does not increase the real part of $\varphi$, and we obtain the following:}
\[
\re \left( \varphi(u, v) \right) \geq \re\left( -\frac{q^2M}{2}\theta^2 + o(\theta)^2 \right) \geq \frac{dr^{-\frac{4}{5}}}{2}.
\]
The above inequality holds for $r$ sufficiently large and for $\epsilon_x$ and $\theta_y$ small enough.  Thus, we have for $r$ sufficiently large:
\[
\left|e^{-r \varphi(u, v)}\right| \leq e^{-\frac{d}{2}r^{\frac{1}{5}}}.
\]

\hrule
\item[{\bf Case 2:}] Consider the case where $u \in \gamma_2$ or $\gamma_3$ and $|\bar u| \geq r^{-\frac{3}{10}}$.  (This case is only relevant when $|\theta|$ is small enough for $\gamma_2$ and $\gamma_3$ to be part of the contour.)  For sufficiently small $\epsilon_x$ and $\theta_y$,  the $O(\bar u \theta)$ term is dominated by the $\bar u$ term.  The remaining $\theta$ terms are dominated by the $\theta^2$ term, so these $\theta$ terms can only increase the real part of $\varphi$.  Thus, the real part of $\varphi$ is at least half the $\frac{1}{p} \bar u$ term, and we have the following for $r$ sufficiently large:
\[
\re \left(\varphi(u, v)\right) \geq \frac{1}{2|p|} r^{-\frac{3}{10}}.
\]
Plugging this into the exponential yields the following:
\[
\left|e^{-r \varphi(u, v)}\right| \leq e^{-\frac{1}{2|p|} r^{\frac{7}{10}}}.
\]

\hrule
\item[{\bf Case 3:}] {Now, consider the case where $u \in \gamma_4$ or $\gamma_5$.  If $|\theta| \leq \frac{\theta_y}{2}$, then $|u - \tilde \kappa(v)| = |p| + \epsilon_x$.  Similarly, if $|\theta| \geq \frac{\theta_y}{2}$, then $|u_1 - \tilde \kappa(v)| \geq |p_1| + \min\{\delta, \epsilon_x\}$.  Let $\mathcal{E} = \min\{\delta, \epsilon_x\}$.  Then,
\begin{equation} \label{umag}
\left| \frac{1}{p} \bar u \right| \geq \left| \frac{1}{p} \right| [|u - \tilde \kappa(v)| - |p|] \geq \frac{\mathcal{E}}{|p|}.
\end{equation}
Also, for $\theta_x$ sufficiently small (depending on $\epsilon_x$ and $|p|$), the following holds:
\begin{equation} \label{uarg}
\left| \arg(u - p - \tilde\kappa(v)) - \arg(p)\right| \leq \frac{\pi}{3}.
\end{equation}
This statement should be clear graphically: let $\alpha = \arg\left(\frac{u}{p} - 1 - \frac{\tilde\kappa(v)}{p}\right)$, and consider Figure \ref{fig:alpha}.
\begin{figure}
\centering
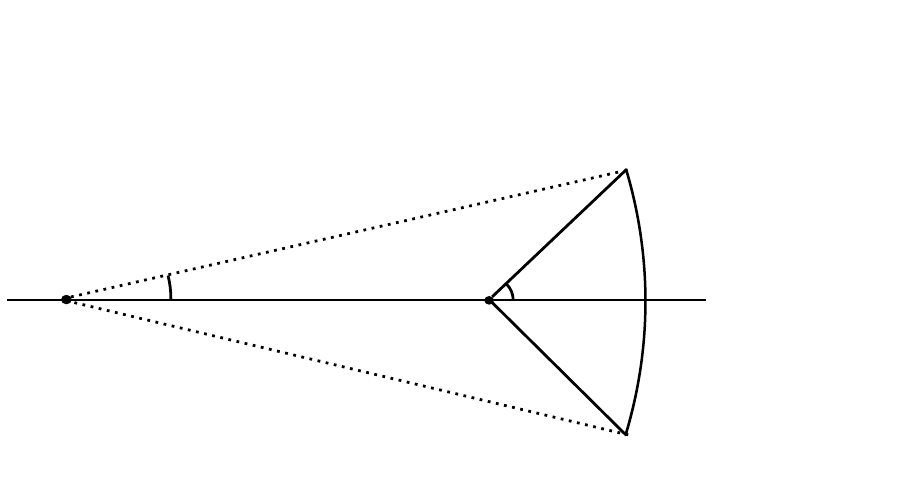
\caption{$\alpha$ must be small when $\theta_x$ is small.}
\label{fig:alpha}
\end{figure}
Clearly, as $\theta_x$ tends to zero, $\alpha$ approaches zero as well, verifying Equation \eqref{uarg}.  Combining Equation \eqref{umag} and Equation \eqref{uarg}, we have the following:
\[
\re \left[\frac{1}{p} \bar u\right] \geq \frac{\mathcal{E}}{|p|} \cos\left(\arg(u - p - \tilde \kappa(v)) - \arg(p)\right) \geq \frac{\mathcal{E}}{2|p|}.
\]
Just like in Case 2,} in the expansion of $\varphi$, the $O(\bar u \theta)$ term is dominated by the $\bar u$ term, and the remaining $\theta$ terms are dominated by the $\theta^2$ term, which only adds to the real part of $\varphi$.  Hence, for $\epsilon_x, \theta_x,$ and $\theta_y$ sufficiently small we have:
\[
\re (\varphi(u, v)) \geq \frac{\mathcal{E}}{4|p|}.
\]
This yields:
\[
\left|e^{-r \varphi(u, v)}\right| \leq e^{-\frac{\epsilon_x}{4|p|} r}.
\]
This decay is much greater than in the other cases: this is because here, $(u, v)$ is bounded away from $(p, q)$ by a constant amount.
\end{itemize}
\hrule
In every case, we have the following bound for $\epsilon_x, \theta_x, \theta_y,$ and $\delta$ sufficiently small and $r$ sufficiently large:
\begin{equation} \label{PhiBound}
\left|e^{-r \varphi(u, v)}\right| \leq e^{-\frac{d}{2} r^{\frac{1}{5}}}.
\end{equation}
Finally, notice that for $\epsilon_x$, $\theta_y$, $\theta_x$, and $\delta$ sufficiently small,
\begin{equation} \label{EasyBound}
\left|\left(u - \chi_1(v - q) - \chi_2(v - q)^2\right)^{-1}v^{-1 + O(1)}\right| \leq 2\left|p^{-1}q^{-1 + O(1)}\right|.
\end{equation}
Plugging Equation \eqref{PhiBound} and Equation \eqref{EasyBound} back into Equation \eqref{integrand} gives the following:
\begin{equation} \label{ExpDecay}
\left| (u - \chi_1(v - q) - \chi_2(v - q)^2)^{-r-1} v^{-s-1} \right| \leq 2p^{-r - 1}q^{-s - 1 + O(1)} e^{-\frac{d}{2}r^{\frac{1}{5}}}.
\end{equation}
Recognizing that the entire domain of integration has size bounded by a constant, we combine Equation \eqref{HBound} and Equation \eqref{ExpDecay} to get the desired result:
\[
\left(\frac{1}{2 \pi i}\right)^2 \iint_{\mathcal{\bar C}(p, q)} \tilde H(u, v)^{-\beta} (u - \chi_1(v - q) - \chi_2(v - q)^2)^{-r - 1}v^{-s - 1} \, \D u \, \D v = O\left(p^{-r}q^{-s} r^{|\beta|} e^{-\frac{d}{2} r^{\frac{1}{5}}}\right).
\]
\end{proof}

Now, we examine the corresponding statement for the product integral.

\begin{lemma} \label{vbig2}
Let $\mathcal{C_\ell^*}$ represent the portion of $\mathcal{C_\ell}$ where $|u - q| \geq r^{-\frac{7}{10}}$.  Then, the following holds uniformly as $r, s \to \infty$ with {$\lambda = \frac{r + O(1)}{s}$} {for some constant $d > 0$}:
\begin{multline*}
\left( \frac{1}{2 \pi i}\right)^2 \iint_{\mathcal{C_\ell^*}(p, q)} [H_x(p, q) \cdot (u - p)]^{-\beta} u^{-r - 1} v^{-s - 1} \left[1 - \frac{\chi_1(v - q) + \chi_2(v - q)^2}{p} \right]^{-r - 1} \,\D u \,\D v\\
 = O\left(p^{-r}q^{-s} r^{|\beta|} e^{-\frac{d}{2} r^{\frac{1}{5}}}\right).
\end{multline*}
\end{lemma}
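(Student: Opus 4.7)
The plan is to mirror the three-step approach of Lemma \ref{vbig1}, adapted to the product integrand. First I would extract the algebraic factor: since $|u - p| \geq r^{-7/10}$ throughout $\mathcal{C}_\ell^*$, we have $[H_x(p,q)\,(u-p)]^{-\beta} = O(r^{|\beta|})$. Second I would factor out $p^{-r} q^{-s}$ from the remaining piece by writing, in analogy with Equation \eqref{integrand},
\[
u^{-r-1} v^{-s-1}\left[1 - \frac{\chi_1(v-q) + \chi_2(v-q)^2}{p}\right]^{-r-1} = p^{-r} q^{-r/\lambda}\,(\text{bounded factor})\,e^{-r\tilde\varphi(u,v)},
\]
where
\[
\tilde\varphi(u,v) := \ln(u/p) + \lambda^{-1}\ln(v/q) + \ln\!\left(1 - \frac{\chi_1(v-q) + \chi_2(v-q)^2}{p}\right),
\]
and the bounded factor absorbs $s = r/\lambda + O(1)$ exactly as in Lemma \ref{vbig1}.

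The key observation is that, using the critical-point identity $\chi_1 = p/(\lambda q)$, the linear $(v-q)$ contributions to $\tilde\varphi$ cancel between the $\lambda^{-1}\ln(v/q)$ term and the $-\chi_1(v-q)/p$ term, so the Taylor expansion of $\tilde\varphi$ about $(p,q)$ matches the leading terms of $\varphi$ from Equation \eqref{phidef}:
\[
\tilde\varphi(u,v) = \frac{u-p}{p} + \frac{M}{2}(v-q)^2 + O((u-p)(v-q)) + O(u-p)^2 + O(v-q)^3.
\]
Thus the case analysis of Lemma \ref{vbig1} carries over, except that only the analogs of Cases 2 and 3 occur, since $|\theta| \leq r^{-2/5}$ throughout $\mathcal{C}_\ell^*$ (the Case 1 region, where $|u-p|=1/r$, is exactly what is excluded by the definition of $\mathcal{C}_\ell^*$).

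For $u \in \gamma_2 \cup \gamma_3$ with $|u-p| \geq r^{-7/10}$, the quantity $(u-p)/p$ is a positive real of modulus at least $r^{-7/10}/|p|$, dominating the other terms and yielding $\re \tilde\varphi \geq r^{-7/10}/(2|p|)$. For $u \in \gamma_4 \cup \gamma_5$ (where $\tilde\kappa \equiv 0$ since $|\theta| \leq r^{-2/5}$), the Taylor expansion around $(p,q)$ is no longer useful because $|u-p|$ is of constant size, so I would work with the original logarithmic form directly: $\re \ln(u/p) = \ln(1 + \epsilon_x/|p|) > 0$ is a positive constant, $\re \lambda^{-1}\ln(v/q) = 0$ since $|v| = |q|$, and the third logarithm is $O(r^{-2/5})$, so $\re \tilde\varphi$ is bounded below by a positive constant. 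In both cases $|e^{-r\tilde\varphi}| \leq e^{-d r^{1/5}/2}$ for some $d > 0$ and all $r$ large enough. Combining this with the algebraic bound and the fact that $\mathcal{C}_\ell^*$ has uniformly bounded area yields the claim.

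The main obstacle I anticipate is handling $\gamma_4 \cup \gamma_5$ cleanly, where the Taylor expansion of $\tilde\varphi$ around $(p,q)$ cannot be used as a first-order tool; the remedy is simply to bound the real parts of the three individual logarithms in $\tilde\varphi$ on that portion of the contour, as sketched above.
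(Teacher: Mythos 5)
Your proposal is correct and matches the paper's approach: the paper's own proof of this lemma is a one-line reference to repeating the argument of Lemma \ref{vbig1} with the analogue of $\varphi$, which is exactly what you carry out, including the key cancellation of the linear $(v-q)$ term via $\chi_1 = p/(\lambda q)$ and the observation that only the analogues of Cases 2 and 3 arise on $\mathcal{C}_\ell^*$. Your direct bound on the real parts of the three logarithms over $\gamma_4 \cup \gamma_5$ is a slightly cleaner substitute for the paper's geometric Case 3 argument, but it is the same method.
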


\begin{proof}
{This statement can be proved by using the same method as in Lemma \ref{vbig1}, by rewriting the integrand and examining the analogue of the function $\varphi(u, v)$ in Lemma \ref{vbig1}.  The corresponding function has exactly the same form as $\varphi(u, v)$ in Equation \eqref{phidef}.}
\end{proof}

We have nearly completed the proof of Lemma \ref{product}.  However, it is not yet clear that the bounds we have found away from the critical point are small compared to the value of the whole integral.  It turns out that the exponential term in these bounds, $e^{-\frac{d}{2}r^{\frac{1}{5}}}$, will ensure that these bounds are small compared to the integral overall.  To show this, it remains to evaluate the asymptotic contribution of the product integral, which will simultaneously show that the contributions to the integral away from the critical point are negligible.

\section{Analyzing the Product Integral} \label{theoremproof}
Lemma \ref{product} has reduced our work to computing the following:
\[
\left( \frac{1}{2 \pi i}\right)^2 \iint_{\mathcal{C}_\ell(p, q)} [H_x(p, q) \cdot (u - p)]^{-\beta} u^{-r - 1} v^{-s - 1} \left[1 - \frac{\chi_1(v - q) + \chi_2(v - q)^2}{p} \right]^{-r - 1} \,\D u \,\D v.
\]
We break it up into two univariate integrals, to be analyzed separately:
\begin{multline} \label{prodint}
\left( \frac{1}{2\pi i}\right)^2 \left( \int_{U} [H_x(p, q) \cdot (u - p)]^{-\beta} u^{-r - 1}\, \D u\right) \cdot \\
 \left( \int_{V} v^{-s - 1} \left[1 - \frac{\chi_1 (v - q) + \chi_2(v - q)^2}{p}\right]^{-r-1} \, \D v\right).
\end{multline}
Above, $U$ is the $u$-projection of the contour, $\mathcal{C}_\ell$, which resembles the $x$ contour in Figure \ref{fig:contour}, but with $G(y) = 0$.  $V$ is likewise the $v$-projection, which is the set, $\left\{v : v = qe^{i \theta}, |\theta| \leq r^{-\frac{2}{5}} \right\}$.  We analyze each integral in lemmas below.

\begin{lemma} \label{uint}
The following holds uniformly as $r, s \to \infty$ with $\lambda = \frac{r + O(1)}{s}$:
\[
\int_{U} [H_x(p, q) \cdot (u - p)]^{-\beta} u^{-r - 1}\, \D u
= \frac{2 \pi i}{\Gamma(\beta)} r^{\beta - 1} p^{-r} \left\{(-H_x(p, q) p)^{-\beta}\right\}_P e^{-\beta(2 \pi i \omega)} + o\left(r^{\beta - 1} p^{-r}\right).
\]
Here, $\omega$ is defined to be the signed number of times the curve $H(tp, tq)$ crosses the branch cut in the definition of the function $\left\{x^{-\beta}\right\}_P$, as described in the statement of the Theorem.
\end{lemma}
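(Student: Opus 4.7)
The plan is to rescale the integral near $u=p$ and recognize it as a Hankel-contour representation of $1/\Gamma(\beta)$, with the winding number $\omega$ capturing the difference between the branch of $H^{-\beta}$ specified by its power series at the origin and the branch $\{x^{-\beta}\}_P$ used in the theorem statement.

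First I would dispose of the outer arcs $\gamma_4\cup\gamma_5$, on which $|u|\geq |p|+\epsilon_x$ (with $G(y)$ dropped since $|\theta|\leq r^{-2/5}$ on $\mathcal{C}_\ell$), by showing their contribution is $O(p^{-r}e^{-cr})$ for some $c>0$: the factor $u^{-r-1}$ is exponentially smaller than $p^{-r-1}$ while $[H_x(p,q)(u-p)]^{-\beta}$ remains bounded, so this part is absorbed into the error term $o(r^{\beta-1}p^{-r})$. Next, on the Hankel portion $\gamma_1\cup\gamma_2\cup\gamma_3$, make the substitution $\zeta=r(u-p)$. The image contour in the $\zeta$-plane is a unit circle around $0$ together with two rays in direction $\arg(p)$ extending out to $|\zeta|=r\epsilon_x$, on opposite sheets of the Riemann surface of $\zeta^{-\beta}$, and the integrand transforms to
\[
r^{\beta-1}\,H_x(p,q)^{-\beta}\,p^{-r-1}\,\zeta^{-\beta}\!\left(1+\tfrac{\zeta}{pr}\right)^{\!-r-1}d\zeta.
\]
On compact subsets of the $\zeta$-plane $(1+\zeta/(pr))^{-r-1}\to e^{-\zeta/p}$ uniformly, and along the rays $\operatorname{Re}(\zeta/p)=|\zeta|/|p|$ is positive so $e^{-\zeta/p}$ decays exponentially. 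This lets one extend the rays to infinity at negligible cost, producing a Hankel contour $\mathcal{H}$ wrapping the ray $\{te^{i\arg(p)}:t\geq 0\}$. The further substitution $w=\zeta/p$ converts $\mathcal{H}$ into a Hankel contour around the positive real axis, and Hankel's formula then yields $\int_{\mathcal{H}}\zeta^{-\beta}e^{-\zeta/p}\,d\zeta=(2\pi i/\Gamma(\beta))\,p^{1-\beta}\cdot(\text{branch factor})$. Collecting prefactors gives
\[
\int_U[H_x(p,q)(u-p)]^{-\beta}u^{-r-1}\,du\sim \frac{2\pi i}{\Gamma(\beta)}\,r^{\beta-1}\,p^{-r}\,(H_x(p,q)p)^{-\beta}\cdot(\text{branch factor}).
\]

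The main obstacle is to verify that the branch factor equals what is required, namely that $(H_x(p,q)p)^{-\beta}\cdot(\text{branch factor}) = \{(-H_x(p,q)p)^{-\beta}\}_P\,e^{-2\pi i\beta\omega}$. Two ingredients are needed. \emph{Locally}, on the rays $\gamma_2$ and $\gamma_3$, $u-p$ is a positive real multiple of $p$, so $H_x(p,q)(u-p)$ is a positive real multiple of $H_x(p,q)p$; because the Hankel contour in the $w$-plane wraps the positive real axis rather than the negative real axis of the standard Hankel formula, the formula produces a $(-1)^{-\beta}$ correction on the relevant sheet, which is exactly what converts $(H_x(p,q)p)^{-\beta}$ into $\{(-H_x(p,q)p)^{-\beta}\}_P$ on that sheet. \emph{Globally}, the branch of $(u-p)^{-\beta}$ appearing in the integrand is the one inherited from analytically continuing the power series of $H^{-\beta}$ from the origin out to the quasi-local cycle; comparing this to the $\{\cdot\}_P$ branch by tracking the image curve $t\mapsto H(tp,tq)$ for $t\in[0,1]$, each signed counterclockwise crossing of the chosen branch ray contributes a factor of $e^{-2\pi i\beta}$, giving the total correction $e^{-2\pi i\beta\omega}$. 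Assembling the two branch adjustments with the Hankel evaluation produces the claimed asymptotic.
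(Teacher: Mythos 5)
Your argument is correct, but the central evaluation goes by a genuinely different route than the paper's. You rescale $\zeta = r(u-p)$, pass to the limit $(1+\zeta/(pr))^{-r-1}\to e^{-\zeta/p}$, extend the truncated contour to a full Hankel contour, and invoke the Hankel integral representation of $1/\Gamma(\beta)$ --- essentially the original Flajolet--Odlyzko transfer argument. The paper instead closes $U$ up with an arc of the circle $\{|u|=|p|+\epsilon_x\}$ (on which $u^{-r-1}$ is exponentially negligible) to get a closed cycle around the origin, reads the integral off as $2\pi i\,[u^r](H_x(p,q)(u-p))^{-\beta}$ via the univariate Cauchy formula, and then evaluates that coefficient exactly with the binomial theorem, applying Stirling's approximation to $\binom{-\beta}{r}$ at the very end. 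The paper's route avoids the limit interchange your approach needs (uniform convergence to $e^{-\zeta/p}$ holds only on compacta, so strictly you also need a direct tail bound such as $|u|^{-r-1}\le(|p|+t)^{-r-1}$ on $\gamma_2\cup\gamma_3$ before extending the rays to infinity --- a standard but unstated step in your sketch); your route is the one that generalizes most naturally to full asymptotic expansions and to logarithmic factors. The branch bookkeeping is the same in both: the local sign flip converting $(H_x(p,q)p)^{-\beta}$ into $\left\{(-H_x(p,q)p)^{-\beta}\right\}_P$ (arising for you from the Hankel contour wrapping the positive rather than negative axis, and for the paper from the $(-1)^r$ in $\binom{-\beta}{r}$ against $(-p)^{-\beta-r}$), and the global factor $e^{-2\pi i\beta\omega}$ from tracking $H(tp,tq)$ across the chosen branch ray, using that the ray avoids $-pH_x(p,q)$ so the final linear segment contributes no extra crossing.
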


The main idea behind the proof of this lemma is that the remaining integral is almost a univariate Cauchy integral, but with a shrunken domain of integration.  Thus, the integral is approximately equal to the coefficients of $[H_x(p, q) \cdot (u - p)]^{-\beta}$, which can be estimated using the binomial theorem and Stirling's approximation. To account for the branch cut in the original function $H(x, y)^{-\beta}$, we add a term $e^{-\beta(2 \pi i \omega)}$, where $\omega$ counts the number of times the image of $H$ wraps around the origin as the input increases from $(0, 0)$ to $(p, q)$.  An example is illustrated in Figure \ref{fig:arg}.

\begin{proof}

The contour $U$ is comprised of the segments $\gamma_i$ for $1 \leq i \leq 5$ in the case where $|v - q| \leq r^{-\frac{2}{5}}$.  The endpoints of the contour, at the beginning of $\gamma_4$ and end of $\gamma_5$, both have magnitude $|u| = |p| + \epsilon_x$.  We can attach these endpoints to a portion of the circle $\{u : |u| = |p| + \epsilon_x\}$ to form a closed cycle $\mathcal{C}_u$ that wraps around the origin and contains no singularities of $[H_x(p, q) \cdot (u - p)]^{-\beta}$.  Because $u^{-r - 1}$ is exponentially smaller on the circle $\{u : |u| = |p| + \epsilon_x\}$ than it is near the critical point $p$, we have:
\[
\int_{U} [H_x(p, q) \cdot (u - p)]^{-\beta} u^{-r - 1}\, \D u = (1 + o(1)) \int_{\mathcal{C}_u} [H_x(p, q) \cdot (u - p)]^{-\beta} u^{-r - 1}\, \D u.
\]
Now, we can use the Cauchy integral formula to evaluate this integral.  However, we finally must worry about how the analytic continuation of $H^{-\beta}$ is defined.  $H(0, 0)$ is nonzero by assumption, and the values of $H^{-\beta}$ are defined near the origin of $\mathbb{C}^2$ by the generating function itself.  Separately from the analytic continuation of $H^{-\beta}$ that we have used up to this point, we choose a branch of the logarithm with the following properties: the branch must agree with $H^{-\beta}$ on some small neighborhood of the origin, and its branch cut must be a line from the origin that is \emph{not} the line $\ell(t) = -tH_x(p, q)p$ for $t \geq 0$, for any of the critical points $(p, q)$.  Define $\left\{x^{-\beta}\right\}_P$ as the value of $x^{-\beta}$ obtained by using this branch of the logarithm.

Consider the curve $H(tp, tq)$ in $\mathbb{C}$, with $t \in [0, 1)$.  This curve may wrap around the origin several times, and in particular, may cross the branch cut described above.  Recall the bivariate power series for $H(x, y)$:
\[
H(x, y) = \sum_{m, n \geq 0} h_{mn}(x - p)^m (y - q)^n.
\]

Plugging in our parameterization yields:
\begin{align*}
H(tp, tq) &= h_{10}(tp - p) + h_{01}(tq - q) + \cdots\\
&= (1 - t)(-ph_{10} - qh_{01}) + O(1 - t)^2.
\end{align*}
The above equations are true as $t \to 1$.  Recall the following conditions: $H_x(p, q) \neq 0$, and $H_y(p, q) = \frac{p}{\lambda q}H_x(p, q)$. Plugging this into our computations above yields:
\[
H(tp, tq) = (1 - t)(-p(1 + \lambda) H_x(p, q)) + O(1 - t)^2.
\]
Thus, as $t$ tends to $1$, the curve $H(tq, tq)$ is essentially linear, with quadratic error.  As long as the branch cut chosen above is not the line $\ell(t)$ mentioned above, the curve will only cross the branch cut finitely many times.  Let $\omega$ be the signed number of times the curve $H(tp, tq)$ crosses the branch cut in the counter-clockwise direction for $t \in [0, 1)$.  That is, every time the curve crosses the branch cut in the counter-clockwise direction, add $1$ to $\omega$, and every time it crosses in the clockwise direction, subtract $1$ from $\omega$.  If the curve only touches the branch cut without crossing it, leave $\omega$ unchanged.

As $t$ approaches $1$, we have shown that $H$ behaves essentially like $H_x(p, q)(u - p)$, and we have traced how the argument changes as we expand the two-dimensional torus towards the critical point.  Now, in order to revert the integral over $\mathcal{C}_u$ back to the appropriate coefficient of $H_x(p, q)(u - p)$ by using the Cauchy integral formula, we must follow the image of $H_x(p, q)(u - p)$ from $u = p$ back to the origin $u = 0$.  As $u$ follows the line from $p$ to $0$, the $H_x(p, q)(u - p)$ will follow the line in $\mathbb{C}$ from $0$ to $-pH_x(p, q)$, the point whose power we are trying to determine.  Because this straight line is $\ell(t)$, it will not cross the branch cut we chose above.  Thus, $\omega$ already accounts for the total number of times the branch cut is crossed.
Figure \ref{fig:arg} shows an example of this setup.
\begin{figure}
\centering
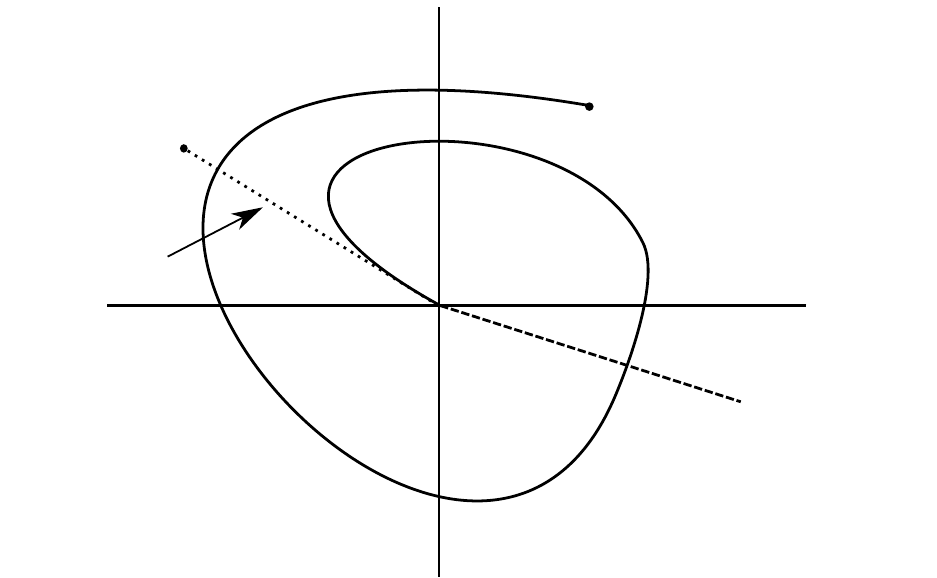
\caption{An example with $\omega = 1$.}
\label{fig:arg}
\end{figure}
In this example, $\omega = 1$, because $H(tp, tq)$ crosses the branch cut once in the counter-clockwise direction.

In conclusion, we have the following:
\begin{align*}
\int_{\mathcal{C}_u} [H_x(p, q) \cdot (u - p)]^{-\beta}& u^{-r - 1}\, \D u \\
&=  (1 + o(1)) 2 \pi i \left[u^r\right] (H_x(p, q) \cdot (u - p))^{-\beta} \nonumber \\
&= (1 + o(1)) 2 \pi i (H_x(p, q))^{r} \binom{-\beta}{r} \nonumber  \cdot \left\{(-H_x(p, q) p)^{-\beta - r}\right\}_P e^{-\beta(2 \pi i \omega)} \nonumber \\
&= {\frac{2 \pi i}{\Gamma(\beta)} r^{\beta - 1} p^{-r} \left\{(-H_x(p, q) p)^{-\beta}\right\}_P e^{-\beta(2 \pi i \omega)}  \nonumber} {+ o\left(r^{\beta - 1}p^{-r}\right).} \nonumber
\end{align*}
{In the last line, we used Stirling's approximation to complete the proof.}
\end{proof}

We turn our attention to the other integral, and find its asymptotic contribution.
\begin{lemma} \label{vintlemma}
The following holds uniformly as $r, s \to \infty$ with $\lambda = \frac{r + O(1)}{s}$:
\[
\int_{V} v^{-s-1} \left[1 - \frac{\chi_1(v-q) + \chi_2(v - q)^2}{p}\right]^{-r-1} \, \D v = iq^{-s} \sqrt{\frac{2\pi}{-q^2Mr}} + o\left(q^{-s} r^{-\frac{1}{2}}\right).
\]
Here, the square root is taken to be the principal root.
\end{lemma}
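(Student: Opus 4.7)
The plan is to recognize this as a standard Fourier--Laplace integral after parameterizing $v = qe^{i\theta}$ for $|\theta| \leq r^{-2/5}$, so that $dv = iv\,d\theta$ and
$$v^{-s-1}\,dv = iq^{-s}e^{-is\theta}\,d\theta.$$
Pulling out the factor $iq^{-s}$, the task reduces to showing that the resulting real-$\theta$ integral equals $\sqrt{2\pi/(-q^2Mr)} + o(r^{-1/2})$.

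First, I would expand the exponent $\Psi(\theta) := -is\theta - (r+1)\log\!\left[1 - \frac{\chi_1(v-q)+\chi_2(v-q)^2}{p}\right]$ as a power series about $\theta=0$. Using Equation \eqref{v-q}, namely $v-q = qi\theta - q\theta^2/2 + O(\theta^3)$, together with the identity $\chi_1 q/p = 1/\lambda$, the linear coefficient of $\log[\cdot]$ becomes $-i/\lambda$, and after collecting the quadratic contributions from both $-z$ and $-z^2/2$ in the expansion of $\log(1+z)$, the quadratic coefficient simplifies, via the definition $M = -2\chi_2/p - \chi_1^2/p^2 - 1/(\lambda q^2)$, to exactly $-q^2M/2$. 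Thus
$$\Psi(\theta) = i\theta\!\left[\tfrac{r+1}{\lambda}-s\right] - \tfrac{(r+1)q^2M}{2}\theta^2 \cdot (-1) + O(r\theta^3) = i\theta\cdot O(1) + \tfrac{(r+1)q^2M}{2}\theta^2 + O(r\theta^3),$$
where the bracket is $O(1)$ because $\lambda s = r+O(1)$. On the domain $|\theta|\leq r^{-2/5}$, the linear piece is $O(r^{-2/5}) = o(1)$ and the cubic remainder is $O(r\cdot r^{-6/5}) = O(r^{-1/5}) = o(1)$, so exponentiating gives
$$e^{\Psi(\theta)} = (1+o(1))\exp\!\bigl(\tfrac{r q^2 M}{2}\theta^2\bigr)$$
uniformly in $\theta$.

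Second, I would handle the Gaussian. The assumption $\re(-q^2M) > 0$ guarantees exponential decay of $\exp(\tfrac{rq^2M}{2}\theta^2)$ in $|\theta|$, so extending the domain from $[-r^{-2/5},r^{-2/5}]$ to all of $\mathbb{R}$ introduces an error of size $O(e^{-cr^{1/5}})$ for some $c>0$, which is absorbed into $o(r^{-1/2})$. A standard Gaussian evaluation (with the principal branch, justified since $\re(-q^2M) > 0$) then gives
$$\int_{\mathbb{R}}\exp\!\bigl(\tfrac{rq^2M}{2}\theta^2\bigr)\,d\theta = \sqrt{\tfrac{2\pi}{-rq^2M}}.$$
Multiplying by the $iq^{-s}$ prefactor yields the claimed asymptotic.

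The main technical obstacle is bookkeeping: verifying that the Taylor remainders in $\Psi$ are uniform in $(r,s)$ as $r,s\to\infty$ with $\lambda=(r+O(1))/s$, and confirming that the $(1+o(1))$ multiplicative error on the integrand translates into only an $o(r^{-1/2})$ additive error after integration (which it does, since the leading Gaussian contributes exactly $\Theta(r^{-1/2})$). A minor subtlety is orienting the $\theta$ interval and choosing the principal square root consistently, which is unambiguous because the real part of the exponent in the Gaussian is strictly negative.
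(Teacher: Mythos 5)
Your proposal is correct and follows essentially the same route as the paper: parameterize $v = qe^{i\theta}$, expand the exponent so that the linear terms cancel up to $O(1)\cdot\theta = o(1)$ via $\chi_1 q/p = 1/\lambda$, identify the quadratic coefficient with $-q^2M/2$, discard the cubic remainder as $O(r^{-1/5})$, and reduce to a Gaussian. The only cosmetic difference is that you extend the domain to all of $\mathbb{R}$ and evaluate the Gaussian directly, whereas the paper extends only to a fixed interval $[-\epsilon,\epsilon]$ and invokes the standard Fourier--Laplace theorem (Theorem 4.1.1 of \cite{PW2013}); both are valid since $\re(-q^2M)>0$.
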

This integral is nearly a Fourier-Laplace integral, again with a shrunken domain of integration, and thus can be estimated using standard Fourier-Laplace approximations.

\begin{proof}
Note that $\lambda = \frac{r + O(1)}{s}$ implies that $s = -\frac{r}{\lambda} + O(1)$.  We rewrite the integrand:
\[
v^{-s - 1}\left[1 - \frac{\chi_1(v - q) + \chi_2(v - q)^2}{p}\right]^{-r - 1} = q^{-s - 1} \left(\frac{v}{q}\right)^{O(1)} e^{-r \psi(v)}.
\]
{Above, we define $\psi$ as $\psi(v) := \log\left[1 - \frac{\chi_1(v - q) + \chi_2(v - q)^2}{p}\right] + \frac{1}{\lambda} \log\left(\frac{v}{q}\right)$.  Next, we expand} $\psi(v)$ as a Taylor series about $v = q$:
\[
\psi(v) = \frac{M}{2}(v - q)^2 + O(v - q)^3.
\]
{Also, since $v = qe^{i\theta}$ and $|\theta| \leq r^{-\frac{2}{5}}$, we have that $\left(\frac{v}{q}\right)^{O(1)} = 1 + o(1)$}.  Plugging these expressions into the integral and rewriting it in terms of $\theta$ gives us the following:
\begin{align}
\int_{V} v^{-s-1} &\left[1 - \frac{\chi_1(v-q) + \chi_2(v - q)^2}{p} \right]^{-r-1} \, \D v  \nonumber\\
&\hspace{3 cm} = q^{-s - 1} \int_V e^{-r\left[\frac{M}{2}(v - q)^2 + O(v - q)^3\right]} (1 + o(1)) \, \D v \nonumber \\
&\hspace{3 cm} = iq^{-s} [1 + o(1)] \int_{-r^{-\frac{2}{5}}}^{r^{\frac{2}{5}}} e^{-r \left[ -\frac{q^2 M}{2}\theta^2 + O(\theta)^3\right]} e^{i\theta} \, \D \theta \nonumber\\
&\hspace{3 cm}=iq^{-s} [1 + o(1)] \int_{-r^{-\frac{2}{5}}}^{r^{\frac{2}{5}}} e^{-r \left[-\frac{q^2 M}{2}\theta^2\right]} e^{i\theta} \, \D \theta.
\label{vint}
\end{align}
The last line is true because $O(\theta)^3 = O\left(r^{-\frac{6}{5}}\right)$ implies that $e^{-rO(\theta)^3} = 1 + o(1)$.  The remaining integral is nearly a Fourier-Laplace integral, but it has a shrinking domain of integration.  We justify that this can be replaced by a domain of integration of constant size.  Specifically, we aim to show that for some $\epsilon > 0$ small enough, the following holds:
\begin{equation}
\int_{-r^{-\frac{2}{5}}}^{r^{\frac{2}{5}}} e^{-r \left[ -\frac{q^2 M}{2}\theta^2\right]} e^{i\theta} \, \D \theta  = 
\int_{-\epsilon}^\epsilon e^{-r \left[ -\frac{q^2 M}{2}\theta^2\right]} e^{i\theta} \, \D \theta + O\left(e^{-\frac{d}{2} r^{\frac{1}{5}}}\right). \label{vconstant}
\end{equation}
To see this, notice that if $|\theta| \geq r^{-\frac{2}{5}}$, then we have:
\[
\re \left( -\frac{q^2 M}{2} \theta^2 \right) \geq \frac{d}{2} r^{-\frac{4}{5}}.
\]
{Above, as before, we define $d = \re\left( -\frac{q^2 M}{2}\right)$.}   Then, we have the following:
\[
\left|e^{-r \left[-\frac{q^2 M}{2} \theta^2\right]}\right| \leq e^{-\frac{d}{2} r^{\frac{1}{5}}}.
\]
This justifies Equation \eqref{vconstant}.  To analyze the remaining integral, we use the standard saddle point approximation, which is proved in Theorem 4.1.1 in \cite{PW2013}. The amplitude $A(\theta) = e^{i\theta}$ and the phase $\phi(\theta) = -\frac{q^2 M}{2} \theta^2$ are both analytic functions near $\theta = 0$, and $\re(\phi) \geq 0$ on the interval $[-\epsilon, \epsilon]$, with equality only at $\theta = 0$.  Thus, we have:
\begin{equation}
\int_{-\epsilon}^\epsilon e^{-r \left[ -\frac{q^2 M}{2}\theta^2\right]} e^{i\theta} \, \D \theta = (1 + o(1)) A(0) \sqrt{\frac{2 \pi}{\phi''(0)r}} e^{-r \phi(0)} = (1 + o(1)) \sqrt{\frac{2\pi}{-q^2 Mr}}. \label{FLint}
\end{equation}
In the above expression, the square root is the principal root.  Plugging Equation \eqref{FLint} into the remaining integral in Equation \eqref{vint} finishes the proof.
\end{proof}
Plugging the results of Lemma \ref{uint} and Lemma \ref{vintlemma} into Equation \eqref{prodint} gives us the final result:
\begin{multline*}
\left( \frac{1}{2\pi i}\right)^2 \int_{U} [H_x(p, q) \cdot (u - p)]^{-\beta} u^{-r - 1}\, \D u \int_{V} v^{-s - 1} \left[1 - \frac{\chi_1}{p}(v - q) - \frac{\chi_2}{p}(v - q)^2\right]^{-r-1} \, \D v
\\ = [1 + o(1)] \frac{r^{\beta - \frac{3}{2}} p^{-r} q^{-s} \left\{(-H_x(p, q)p)^{-\beta}\right\}_P e^{-\beta(2 \pi i \omega)}}{\Gamma(\beta) \sqrt{-2 \pi q^2 M}}.
\end{multline*}

Unfortunately, for general $H$, the formula in the Theorem becomes quite messy, as we must find how many times the image of $H$ wraps around the origin along the path connecting $(0, 0)$ to each critical point $(p, q)$.  Additionally, the sign of the square root in the formula can cause headaches.  Luckily, in the case where $H$ has only real coefficients and there is a single smooth strictly minimal critical point, we can simplify the formula, as seen in Corollary \ref{GrahamCorollary} above.  The proof of the Corollary is simple from here. 

\begin{proof}
 Since $H$ has real coefficients and $p$ and $q$ are positive real numbers, we must have that $-H_x(p, q)p$ is real.  The line $H(tp, tq)$ for $0 \leq t \leq 1$ is real and can't pass through the origin since $(p, q)$ is minimal.  Also, the line from $0$ to $-H_x(p, q) p$ is real, and it approximates $H(tp, tq)$ for $t$ near $1$, as described in Section \ref{theoremproof} and Figure \ref{fig:arg} above.  This would mean that $-H_x(p, q) p$ is in fact positive.  Additionally, the line $H(tp, tq)$ cannot wrap around the origin, which forces $\omega = 0$ in the statement of the original Theorem.  As a result, $\{(-H_x(p, q)p)^{-\beta}\}_P$ is positive.  With this term positive, the only other term with an unknown sign is $\sqrt{-2 \pi q^2 M}$.  However, knowing that $-2 \pi q^2 M$ is real, in order for the coefficients of $H^{-\beta}$ to be real at all, $-2 \pi q^2 M$ must be negative. Thus, $\sqrt{-2 \pi q^2 M}$ is always positive (since the principal root is taken), which forces the whole formula to be positive always.
 \end{proof}

\section{Examples} \label{Examples}
\subsection{{A Generating Function with Multinomial Coefficients}}
{In order to see how Theorem \ref{MainResult} can be applied, we look at an artificial example where the computations are particularly simple, and where it is easy to verify that there is a strictly minimal critical point.  Consider the coefficients $x^ry^r$ of the generating function,}
  \[
  F(x, y) = \frac{1}{(1 - x - y)^{\frac{1}{2}}}.
  \]
  {Using the Multinomial Theorem, we have that the coefficient $[x^ry^s]F(x, y) = \binom{-\frac{1}{2}}{r} \binom{-\frac{1}{2} - r}{s}$.  Thus, using Theorem \ref{MainResult}, we will find an approximation for $\binom{-\frac{1}{2}}{r} \binom{-\frac{1}{2} - r}
  {r}$ for large $r$.}  Here, the direction $\lambda = 1$.  So, the critical point equations become the following:
  \begin{align*}
  1 - x - y &= 0,\\
  x &= y.
  \end{align*}
  The unique solution to these equations is $(p, q) = (1/2, 1/2)$.  Because the $x$ partial derivative of $1 - x - y$ never vanishes, this point is a smooth critical point.  Additionally, it is easy to verify that $(1/2, 1/2)$ must be a strictly minimal critical point: on the zero set of $1 - x - y$, we have that $y = 1 - x$, and it is clear that if $|x| \leq \frac{1}{2}$, then $|y| \leq \frac{1}{2}$, with equality only when $x = y = \frac{1}{2}$.  Thus, we can apply Corollary \ref{GrahamCorollary} to approximate $[x^ry^{r}]F(x, y)$.  With $H(x, y) = 1 - x - y$ and $\lambda = 1$, we compute the following:
  \[
  \chi_1 = 1, \chi_2 = 0, H_x(1/2, 1/2) = -1, M = -8.
  \]
  Plugging these into Corollary $1$ with $\beta = 1/2$ yields:
  \[
  [x^ry^{r}]F(x, y) \sim \frac{2^{2r - 1/2}}{\pi r}.
  \]
  For $r = {1}00$, this approximation yields $3.61688 \cdot 10^{57}$, while the actual coefficient of $F(x, y)$, ${\binom{-1/2}{100}\binom{-1/2 - 100}{100}}$, is approximately $3.61011 \cdot 10^{57}$.
  
\subsection{Complete Graph Coloring}
 \label{Example}
Using Corollary \ref{GrahamCorollary} above, we will find an asymptotic formula for the coefficients of the following bivariate generating function:
\[
F(x, y) =  \frac{1-x(1+y)}{\sqrt{1-2x(1+y) -x^2(1-y)^2}}.
\]
This generating function describes the stationary distributions of a red/blue color-swapping algorithm on the complete graphs $K_r$, where the coefficient of $x^ry^s$ is proportional to the probability that $s$ of the vertices of $K_r$ are blue in the stationary distribution, rescaled by a factor proportional to $\binom{2r - 2}{r}$.  For more details, see \cite{GG2015}.  Because each $y$ term is attached to an $x$ of equal or greater power, the power series expansion of $F$ will have no terms where the power of $y$ is larger than the power of $x$.  Thus, we will look at the asymptotics only the case where $\mu := \lambda^{-1} = \frac{s}{r} \in (0, 1)$.  (We switch to $\mu$ so that the range of possible directions is bounded.)

The following paragraphs briefly describe how to check the conditions of Corollary \ref{GrahamCorollary} computationally.  A Maple worksheet providing the code for these computations is available online:
\begin{center}
\url{https://github.com/TorinGreenwood/JCTA-BivAlg}
\end{center}
To begin, we find the critical points of the denominator, $H(x, y) = 1-2x(1+y) -x^2(1-y)^2$.  We can use a Gr\"{o}bner basis to compute these points in terms of $\mu$. 
Here, the first polynomial in the basis is as follows:
\begin{equation*} \label{xpoly}
1-2\mu+\mu^2+(-4-2\mu^2+6\mu)x+2x^3+(2\mu^2-4\mu+3)x^2.
\end{equation*}
Because this is a degree $3$ polynomial in $x$, we can solve for the three values of $x$ explicitly in terms of $\mu$.
Once the $x$ solutions are found, they can be plugged into the second basis element of the Gr\"{o}bner basis to compute the corresponding $y$ solutions in terms of $\mu$.  We must check that all of these critical points are smooth, that neither $x$ nor $y$ is zero, and that $M$ is not zero at each critical point where $\mu \in (0, 1)$.  Each of these facts can be verified with additional Gr\"{o}bner bases.   Showing that $M$ is nonzero requires a slightly more complicated Gr\"{o}bner basis, but it is easy to verify via Sturm sequences that $M$ is never zero for $\mu \in (0, 1)$.  Finally, checking the critical points for minimality is computationally difficult, but can be achieved for any particular $\mu$ through quantifier elimination, using Mohab Safey El Din's RAGlib package, \cite{S2015}, which relies on the FGb Maple package, \cite{F2010}. 
To see how closely the formula approximates coefficients, we look at the case where $\mu = \frac{1}{2}$.  Using the Gr\"{o}bner bases mentioned above, we easily compute the unique minimal critical point, $(p, q) = \left(\frac{1}{4}, 1\right)$.  From here, we compute the following:
\[
\chi_1 = \frac{1}{8}, \ \ \chi_2 = -\frac{3}{64}, \ \ H_x\left(\frac{1}{4}, 1\right) = -4, \ \ M = -\frac{3}{8}.
\]
Thus, from Corollary \ref{GrahamCorollary} above (with $\beta = \frac{1}{2}$), we have that as $r, s \to \infty$ with $2 = \frac{r + O(1)}{s}$ as $r, s \to \infty$,
\[
\left[x^ry^s\right] H(x, y)^{-1/2} \sim \frac{r^{-1} \left(\frac{1}{4}\right)^{-r}}{\Gamma(\frac{1}{2}) \sqrt{\frac{3\pi}{4}}} = \frac{2 \cdot 4^r}{r \pi \sqrt{3}}.
\]
If the numerator of $F$ was a monomial $ax^my^n$, it would simply shift the terms in the series of $F$ by $m$ in the $x$ variable and $n$ in the $y$ variable, and multiply all the coefficients by $a$.  We can break up the numerator of $F$ linearly and compute these shifts separately.  Equivalently, to account for the fact that the numerator $G(x, y) :=  1 - x(1 + y)$ is not a monomial, we multiply our approximation above by $G$ evaluated at the critical point.  In this case, since $G\left(\frac{1}{4}, 1\right) = \frac{1}{2}$, the final approximation is:
\[
[x^r y^s] F(x, y) \sim \frac{4^r}{r \pi \sqrt{3}}.
\]
When $r = 70,$ this formula gives approximately $3.65924 \cdot 10^{39}$.  The actual value of $\left[x^{70}y^{35}\right] F(x, y)$ is approximately $3.59821 \cdot 10^{39}$.  The ratio of these values is $1.017$, showing that the approximation is already quite good for $r = 70$.

\section{Future Research}

Possible future research directions include finding more complete asymptotic expansions for the coefficients of $H^{-\beta}$, and also extending to more variables.  Finding a more complete asymptotic expansion should be possible by analyzing the Cauchy integral using the same contour, but with more precise error handling.  Extending the formula to more variables is challenging because the change of variables needed to approximate a multivariate function by a univariate linear function quickly becomes complicated.  

Another research direction would be to look at other types of algebraic singularities.  For example, one could study the coefficients of a function $F(x, y)$ which is known to satisfy some polynomial equation but may not have the form $H^{-\beta}$. {One challenge in this case is to identify which singularities closest to the origin contribute to the asymptotics of the coefficients.  If it is possible to determine the singularities, then the equation $F$ satisfies can be used to estimate the behavior of $F$ near the singularities.  Assuming $F$ is reasonably well-behaved near its singularities, the results in this paper could be applied like a transfer theorem to compute the asymptotics for the coefficients of $F$.  In some cases, it is possible to find accurate computational estimates for the coefficients of $F$, which may be enough to determine where the contributing critical points for $F$ are located.
}

Combining these results with other asymptotic techniques may yield stronger results and more complete asymptotic expansions, too.  For example, creative telescoping methods take the generating function in question and find a partial differential equation that the function satisfies.  By finding a basis of solutions to this differential equation, one can find complete asymptotic expansions to the coefficients of the generating function.  Unfortunately, it is often difficult to find the correct coefficients of the solution to the PDE -- this is referred to as the \emph{connection problem}.  However, if the leading-term asymptotics of the solution are known, the connection problem can often be solved.  Thus, combining these creative telescoping methods with the first-order asymptotics results in this paper, one may be able to analyze generating functions without too many technical computations.

\subsection*{Acknowledgements}
\label{sec:ack}
The University of Pennsylvania supported this research in part, as I completed my doctoral thesis.  This work was also supported in part by NSF grant DMS-1344199.  I thank my advisor, Robin Pemantle, for his generous help and guidance.  Mohab Safey El Din and Rainer Sinn graciously helped significantly with computations in the example.  Thank you to the referees for the 28th International Conference on Formal Power Series and Algebraic Combinatorics for their valuable feedback and references, and to the anonymous referees for the Journal of Combinatorial Theory, Series A, for numerous very helpful suggestions.

\nocite{*}
\bibliographystyle{alpha}
\bibliography{sample}

\begin{thebibliography}{99999999}

\bibitem[BCCG15]{GG2015} S. Butler, F. Chung, J. Cummings, and R. Graham.  \emph{Edge flipping in the complete graph.} Preprint.  Available online: \url{http://www.math.ucsd.edu/~ronspubs/pre_flipping.pdf}

\bibitem[BR83]{BR1983} E.A. Bender and L.B. Richmond.  \emph{Central and local limit theorems applied to asymptotic enumeration.  II.  Multivariate generating functions.}  In J. Combin. Theory Ser. A, vol 34 (1983) pp. 255-265.

\bibitem[Fau10]{F2010} J.-C. Faug\`{e}re. \emph{FGb: A Library for Computing Gr\"{o}bner Bases.}  In Komei Fukuda, Joris Hoeven, Michael Joswig, and Nobuki Takayama, editors, \emph{Mathematical Software ICMS 2010}, vol 6327 of \emph{Lecture Notes in Computer Science}, pp. 84-87, Berlin, Heidelberg, September 2010.  Available online: \url{http://www-polsys.lip6.fr/~jcf/FGb/FGb/index.html}

\bibitem[FO90]{FO1990} P. Flajolet and A. M. Odlyzko.  \emph{Singularity analysis of generating functions.}  In SIAM J. Discrete Math., vol 3 (1990) pp. 216-240. Available online: \url{http://algo.inria.fr/flajolet/Publications/FlOd90b.pdf}

\bibitem[GR92]{GR1992} Z. Gao and L.B. Richmond. \emph{Central and local limit theorems applied to asymptotic enumeration.  IV.  Multivariate generating functions.}  In J. Comput. Appl. Math., vol 41 (1992) pp. 177-186.

\bibitem[GM88]{GM1988} M. Goresky and R. MacPherson.  \emph{Stratified Morse Theory,} Springer-Verlag, Berlin Heidelberg, 1988.  Available online: \url{http://www.math.ias.edu/~goresky/MathPubl.html}

\bibitem[Gre16]{Gre2016} T. Greenwood. \emph{Asymptotics of Bivariate Analytic Functions with Algebraic Singularities.}  Extended abstract.  In DMTCS proc., BC (2016) pp. 599-610.  Available online: \url{https://fpsac2016.sciencesconf.org/browse/author?authorid=413143}

\bibitem[Hwa96]{H1996} Hwang, H.-K. \emph{Large deviations for combinatorial distributions.  I.  Central limit theorems.} In Ann. Appl. Probab., vol 6 (1996) pp. 297-319. Available online: \url{http://projecteuclid.org/download/pdf_1/euclid.aoap/1034968075}

\bibitem[Hwa98]{H1998} Hwang, H.-K. \emph{Large deviations for combinatorial distributions.  II.  Local limit theorems.} In Ann. Appl. Probab., vol 8 (1998) pp. 163-181. Available online: \url{http://projecteuclid.org/download/pdf_1/euclid.aoap/1027961038}

{
\bibitem[PH14]{PH2014} S. Poznanovi\'{c} and C.E. Heitsch.  \emph{Asymptotic distribution of motifs in a stochastic context-free grammar model of RNA folding.}  In J Math Biol., Dec 2014 pp. 1743-72.  Available online: \url{https://www.ncbi.nlm.nih.gov/pubmed/24384698}
}

\bibitem[PW13]{PW2013} R. Pemantle and M. Wilson. \emph{Analytic Combinatorics in Several Variables,} Cambridge University Press, New York, 2013. Available online: \url{http://www.math.upenn.edu/~pemantle/ACSV.pdf}

\bibitem[RW08]{RW2008} A. Raichev and M. Wilson.  \emph{Asymptotics of Coefficients of Multivariate Generating Functions: Improvements for Smooth Points.}  In Electron. J. Combin.,  vol 15 (2008).  Available online: \url{http://www.combinatorics.org/ojs/index.php/eljc/article/view/v15i1r89/pdf}

\bibitem[Saf15]{S2015} M. Safey El Din.  \emph{RAGlib, a Maple package dedicated to real solutions of polynomial systems}, March 2015.  Available online: \url{http://www-polsys.lip6.fr/~safey/RAGLib/}



\end{thebibliography}
\label{sec:biblio}

\end{document}